\theoremstyle{plain}
\newtheorem{thm}{Theorem}[section]
\newtheorem{lem}[thm]{Lemma}
\newtheorem{prop}[thm]{Proposition}
\newtheorem{ques}[thm]{Question}
\newtheorem{cor}[thm]{Corollary}
\theoremstyle{definition}
\newtheorem{de}[thm]{Definition}
\theoremstyle{remark}
\newtheorem{rem}[thm]{Remark}
\def \N {\mathbb N}
\def \Z {\mathbb Z}
\def \F {\mathcal F}
\def \B {\mathcal B}
\def \P {\mathcal P}
\def \id {{\rm id}}
\def \ep {\epsilon}
\def \d {\delta}
\begin{document}

\title[An answer to Furstenberg's problem]{An answer to Furstenberg's problem on topological disjointness}

\author{Wen Huang}
\author{Song Shao}
\author{Xiangdong Ye}

\address{Wu Wen-Tsun Key Laboratory of Mathematics, USTC, Chinese Academy of Sciences and
Department of Mathematics, University of Science and Technology of China,
Hefei, Anhui, 230026, P.R. China.}

\email{wenh@mail.ustc.edu.cn}
\email{songshao@ustc.edu.cn}
\email{yexd@ustc.edu.cn}

\subjclass[2010]{Primary: 37B05; 54H20}
%\keywords{regionally proximal relation; minimal flow}

\thanks{This research is supported by NNSF of China (11571335, 11431012, 11371339) and by ``the Fundamental Research Funds for the Central Universities''.}

\date{}

\begin{abstract}
In this paper we give an answer to Furstenberg's problem on topological disjointness. Namely, we show that a
transitive system $(X,T)$ is disjoint from all minimal systems if and only if
$(X,T)$ is weakly mixing and there is some countable dense subset $D$ of $X$ such that for any minimal system $(Y,S)$,
any point $y\in Y$ and any  open neighbourhood $V$ of $y$, and for any nonempty open subset $U\subset X$,
there is $x\in D\cap U$ satisfying that $\{n\in\Z_+: T^nx\in U, S^ny\in V\}$ is syndetic. Some characterization
for the general case is also described.
%This also answers a question by Oprocha affirmatively in some sense.

As applications we show that if a transitive system $(X,T)$ is disjoint from all minimal systems, then so are
$(X^n,T^{(n)})$ and $(X, T^n)$ for any $n\in \N$. It turns out that a transitive system $(X,T)$  is disjoint from all minimal
systems if and only if the hyperspace system $(K(X),T_K)$  is disjoint from all minimal systems.

\end{abstract}

\maketitle

%\markboth{ergodic}{S. Shao and X.D. Ye}

%\newpage

%\tableofcontents \settocdepth{subsection}

%\newpage

\section{Introduction}

The notion of {\em disjointness} of two dynamical systems, both in ergodic
theory and in topological dynamics, was introduced by Furstenberg in his
seminal paper \cite{Fur67}. This notion plays an important role for ergodic systems,
see for instance \cite{G}. Compared with ergodic theory, there still remain in
topological dynamics, some basic problems to settle. We refer to \cite{HY05,O10, O17,LYY15, LOYZ, D, DSY12, GW} for recent developments.

By a {\em topological dynamical system} (t.d.s.) we mean a pair $(X,T)$, where
X is a compact metric space (with metric $d$) and $T: X\rightarrow X$ is continuous
and surjective. Let $(X,T)$ and $(Y,S)$ be two t.d.s. We say $J\subset X\times Y$ is a {\em joining} of $X$ and $Y$ if $J$ is a nonempty, closed, invariant set, which is mapped
onto $X$ and $Y$ by the respective coordinate projections. The product $X\times Y$
is always a joining and when it is the only joining we say that $(X,T)$ and
$(Y,S)$ are {\em disjoint}, denoted by $(X,T)\perp (Y,S)$ or $X\perp Y$. Note that if $(X,T)\perp (Y,S)$ then one of them is minimal \cite{Fur67}, and if in addition $(Y,S)$ is minimal then the set of recurrent points of $(X,T)$ is dense in $X$ \cite{HY05}.

In \cite{Fur67}, Furstenberg showed that each totally transitive system
with dense set of periodic points is disjoint from any minimal
system; and each weakly mixing system is disjoint from any minimal
distal system. He left the following question:

\medskip
\noindent {\bf Problem} \cite[Problem G]{Fur67}: {\em Describe the system who is disjoint
from all minimal systems.}

\medskip

Let $\mathcal{T}$ be a class of t.d.s. and $(X,T)$ be a t.d.s. If
$(X,T)\perp (Y,S)$  for every $(Y,S)\in \mathcal{T}$, then we denote it
by $(X,T)\perp \mathcal{T}$, and let $\mathcal{T}^\perp=\{(X,T): (X,T)\perp \mathcal{T}\}$.
Let $\mathcal{M}$ be the class of all minimal systems. Hence
Furstenberg's problem can be restated as follows: {\em Describe the class ${\mathcal{M}}^\perp$.}

\medskip

In \cite{HY05}, it was shown that a transitive t.d.s. disjoint with all
minimal systems has to be a weakly mixing $M$-system; and each weakly mixing system
with dense small periodic sets\footnote{We say that $(X,T)$ has dense small periodic sets if for
any nonempty open subset $U$ of $X$ there exists a closed subset $Y$ of $U$ and $k\in \N$
such that $T^kY\subset Y$. Clearly, every transitive system with dense set of periodic points (so-called $P$-system) has dense small periodic sets.} is disjoint from any minimal system.
Then in \cite{DSY12,O10}, this result was generalized as follows: every weakly mixing system with dense distal points is disjoint
with any minimal system. A further effort was made in \cite{LYY15}: if $(X,T)$ is weakly mixing and $(K(X), T_K)$ has dense distal points,
then $(X,T)$ is disjoint with all minimal systems, where $(K(X), T_K)$ is the hyperspace system of $(X,T)$. In \cite{LOYZ} an example $(X,T)$
was constructed such that $(K(X), T_K)$ has dense distal points, and at the same time $(X,T)$ does not have dense distal points.

\medskip

Let $(X,T)$ be a t.d.s. Recently, Oprocha gave the following result \cite{O17}:  If $(X, T)$ is weakly mixing and for every minimal system $(Y, S)$
there exists a countable set $D\subseteq X$ such that for every nonempty open set $U$ of $X$ the following
condition holds:
\begin{enumerate}
  \item for any $y\in Y$ and any open neighbourhood $V$ of $ y$ there is $x\in D\cap U$ such that the set of
transfer times $N_{T\times S}((x,y),U\times V)=\{n\in\Z_+: T^nx\in U, S^ny\in V\}$ is syndetic,
\end{enumerate}
then $(X, T)$ is disjoint with every minimal system.

\medskip

Oprocha asked that whether assumptions in his theorem are also a necessary condition for
$M$-systems to be disjoint with all minimal systems?

\medskip

%\noindent {\em Oprocha's condition: For any nonempty open subset $U\subset X$, there is $x=x(U)\in U$ such that for any nonempty open subset $V\subset Y$ of any minimal system $(Y,S)$, $N_{T\times S}((x,y),U\times V)=\{n\in\Z_+: T^nx\in U, S^ny\in V\}$ is syndetic for any $y\in V$ and any $V\ni y$. }

%\medskip

%Oprocha showed that a weakly mixing satisfying condition above is disjoint with every minimal system \cite{O17}.

%\medskip

%In this paper, we give a weaker condition than Oprocha's one, and show this condition can be used to answer Furstenberg's problem. Let $(X,T)$ be a system, and we say it satisfies  {\em condition $(\star)$} if

%\medskip
%\noindent {\em for any nonempty open subset $U\subset X$ and any nonempty open subset $V\subset Y$ of any minimal system $(Y,S)$, there is a minimal point $x=x(U,V)\in U$  such that $N_{T\times S}((x,y),U\times V)=\{n\in\Z_+: T^nx\in U, S^ny\in V\}$ is syndetic for any $y\in V$.}

%\medskip

%The main result of the paper is %that a transitive system $(X,T)$ is disjoint from all minimal systems if and only if $(X,T)$ is weakly mixing and satisfies condition $(\star)$.

In this paper, we show the following theorem:

\begin{thm} \label{new-main}
Let $(X,T)$ be a transitive t.d.s. Then $(X,T)$ is disjoint from all minimal systems if and only if
$(X,T)$ is weakly mixing and there is some countable dense subset $D$ of $X$ (consisting of minimal points)
such that for any minimal system $(Y,S)$, any point $y\in Y$ and any  open neighbourhood $V$ of $y$,
and for any nonempty open subset $U\subset X$,
there is $x\in D\cap U$ satisfying that $N_{T\times S}((x,y),U\times V)$ is syndetic.
\end{thm}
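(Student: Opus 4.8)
The plan is as follows. The sufficiency is essentially immediate from the theorem of Oprocha \cite{O17} quoted above: a single countable dense set $D$ of minimal points satisfying the stated transfer condition for \emph{every} minimal system provides, for each fixed minimal $(Y,S)$, a countable set (namely $D$ itself) with Oprocha's property~(1), so $(X,T)\perp(Y,S)$. Thus the whole content lies in the necessity. Assume $(X,T)$ is transitive and $(X,T)\perp\mathcal{M}$. By the results recalled from \cite{HY05}, $(X,T)$ is then a weakly mixing $M$-system; in particular $X$ is weakly mixing and its minimal points are dense, and since $X$ is compact metric we may begin from a countable dense set of minimal points. It remains to arrange the transfer condition.

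The key reduction is the standard fact that if $(x,y)$ is a minimal point of $(X\times Y,T\times S)$ then $N_{T\times S}((x,y),W)$ is syndetic for every neighbourhood $W$ of $(x,y)$; taking $W=U\times V$ with $x\in U$ and $y\in V$, it therefore suffices to produce, for each minimal $(Y,S)$, each $y\in Y$ and each nonempty open $U\subseteq X$, a point $x\in U$ (to be placed in $D$) for which $(x,y)$ is minimal in $X\times Y$. As a first step I would show that $(X\times Y,T\times S)$ is itself an $M$-system. Let $AP$ be the union of the minimal subsets of $X\times Y$. Every minimal subset projects onto $Y$ (minimality of $Y$), so $\pi_Y(AP)=Y$; and each minimal point $x$ of $X$ lies under some minimal point of $X\times Y$ (take a minimal subset of $\overline{O(x)}\times Y$, which projects onto $\overline{O(x)}$), so the dense set of minimal points of $X$ lies in $\pi_X(AP)$ and $\pi_X(\overline{AP})=X$. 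Hence $\overline{AP}$ is a joining of $X$ and $Y$, and disjointness forces $\overline{AP}=X\times Y$. Since moreover $\overline{O((x_0,y))}$ is a joining for a transitive point $x_0$, hence all of $X\times Y$, the product is transitive with dense minimal points.

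The main obstacle is to upgrade \emph{density of minimal points in $X\times Y$} to \emph{density, inside the single fibre $X\times\{y\}$, of the $x$ for which $(x,y)$ is minimal}: slicing a dense set at a fixed second coordinate can destroy density, as a graph joining has one-point fibres. Writing $A_y=\{x\in X:(x,y)\text{ is minimal in }X\times Y\}$, one has $A_y=\bigcup_v\mathrm{Fix}_X(v|_X)$, the union over minimal idempotents $v$ of the Ellis semigroup $E(X\times Y)$ with $vy=y$, where $v|_X$ is the minimal idempotent of $E(X)$ induced by $\pi_X$. The plan is to use the full strength of disjointness, not merely the $M$-system property just derived, to show that enough minimal idempotents of $E(X)$ lift to idempotents of $E(X\times Y)$ fixing $y$ that $\bigcup_v\mathrm{Fix}_X(v|_X)$ is dense in $X$; weak mixing is what rules out the obstruction blocking the naive lifting. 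I expect this fibrewise density, for \emph{all} $y$, to be the hard technical core. One cannot hope that \emph{every} minimal $x$ works for every $(Y,y)$, since that would make every minimal point a distal point and so force dense distal points in $X$, contradicting the example of \cite{LOYZ}; the dependence of the good idempotents on $y$ must be handled with care, and the relation $T(A_y)\subseteq A_{Sy}$ together with minimality of $Y$ should help propagate goodness across the fibres.

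Finally, to obtain a \emph{single} countable $D$ valid for all minimal systems and all $y$ simultaneously, I would exploit two uniformization devices. First, the transfer condition passes from a system to all of its factors with the \emph{same} $D$: if $\pi\colon(Y',S')\to(Y,S)$ is a factor map, then lifting $y$ to $y'$ and replacing $V$ by $\pi^{-1}(V)$ gives $N_{T\times S'}((x,y'),U\times\pi^{-1}(V))=N_{T\times S}((x,y),U\times V)$, so syndeticity is preserved. Second, the only data about $(Y,S)$ entering the condition are the return-time sets $N(y,V)\subseteq\Z_+$, which range over a second countable family of subsets of $\Z_+$; combining this with the fibrewise density of the previous paragraph, I would diagonalize over a countable base of $X$ and this family of return sets to extract one countable dense set $D$ of minimal points meeting $A_y\cap U$ for every minimal $(Y,S)$, every $y$ and every $U$. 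The two delicate points, and precisely where disjointness from \emph{all} minimal systems is used beyond the $M$-system property, are the fibrewise density of $A_y$ and this final uniformization.
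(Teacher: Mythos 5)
Your sufficiency direction is fine: with a universal $D$ in hand, Oprocha's criterion from \cite{O17} (reproved in the paper as (2)$\Rightarrow$(1) of Theorem \ref{train2}, via Lemma \ref{akin-k}) applies to each minimal $(Y,S)$ separately. The genuine gap is in the necessity. You reduce the problem to showing that for every minimal $(Y,S)$ and \emph{every} $y\in Y$ the fibre set $A_y=\{x\in X:(x,y)\ \text{is minimal in}\ X\times Y\}$ is dense in $X$, and then you never prove this: you explicitly defer it (``the hard technical core'') to an unspecified lifting of minimal idempotents in $E(X\times Y)$. This claim is strictly stronger than the theorem, since by the classical characterization of minimal points (syndetic return times to \emph{all} neighbourhoods, \cite{GH}) minimality of $(x,y)$ forces $N_{T\times S}((x,y),U\times V)$ syndetic for every neighbourhood pair, whereas the theorem only asks, for each fixed $U,V$, for \emph{some} $x\in D\cap U$, with $x$ allowed to depend on $U$ and $V$. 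Nothing in the paper or its references establishes density of $A_y$ under disjointness — the paper's Corollary \ref{cor-result-2} records the related minimality condition only as a \emph{sufficient} criterion — so your route replaces the statement to be proved by a stronger one whose truth is unknown; the $M$-system property of $(X\times Y,T\times S)$ that you do verify is correct but, as you yourself note with the graph-joining caveat, far from fibrewise density.

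The final uniformization step also fails as described: the return sets $N_S(y,V)$ over all metric minimal systems form an \emph{uncountable} family of subsets of $\Z_+$ (already the irrational rotations with distinct rotation numbers give uncountably many essentially distinct such sets), so there is no countable list of constraints to diagonalize against, and the factor-map observation cannot collapse everything to one metric system since the universal minimal flow of $\Z$ is nonmetrizable. The paper's proof of (1)$\Rightarrow$(3) shows this entire superstructure is unnecessary: \emph{any} countable dense set $D$ of minimal points works (such $D$ exists because disjointness makes $(X,T)$ an $M$-system by Proposition \ref{hy-main}). Arguing by contradiction, if the condition failed for some $(Y,S)$, $y$, $V$, $U$, then for each $x\in D\cap U$ the set $N_{T\times S}((x,y),(U\times V)^c)$ is thick; intersecting it with the syndetic sets $N_T(x,B_{1/k}(x))$ produces times $n_k$ with $n_k,n_k+1,\dots,n_k+k$ in the complement and $T^{n_k}x\to x$, and a limit point of $(T\times S)^{n_k}(x,y)$ generates an invariant set $J_{(x,y)}\subseteq (U\times V)^c$ whose first projection contains $x$. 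The closure of $\bigcup_{x\in D\cap U}J_{(x,y)}$ then projects onto $X$ (transitivity) and onto $Y$ (minimality of $Y$), yielding a joining avoiding $U\times V$ — a contradiction. Thus universality of $D$ is automatic and no minimality of $(x,y)$, Ellis-semigroup lifting, or diagonalization is needed; to salvage your approach you would have to actually prove the fibrewise density of $A_y$, which would be a new and stronger result than the theorem itself.
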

Hence we give an answer to Furstenberg's problem, and also answers the question by Oprocha affirmatively in some sense.
Note that the countable set in our theorem is universal for any minimal system.

\medskip

Central sets were introduced by Furstenberg, and they have very rich combinatorial properties \cite{Fur81}. A subset $A$ of $\Z_{+}$ is called a {\it dynamical syndetic set}, if there exist a minimal system $(Y,S)$, $y\in Y$ and
an open neighbourhood $V_y$ of $y$ such that $A\supset N_S(y,V_y)=\{n\in \Z_+: S^ny\in V_y\}$. We will show (Theorem \ref{C-thm})
that a set $S$ is a central set if and only if $S=A\cap B$, where $A$ is thick and $B$
is dynamical syndetic. Using the notion of central sets we can give the following theorem.

\begin{thm}
Let $(X,T)$ be a transitive t.d.s. Then $(X,T)$ is disjoint from all minimal systems
if and only if $(X,T)$ is weakly mixing and there is a countable dense subset $D$ of $X$ such
that for each nonempty open subset $U$ of $X$ and each central set $S=A\cap B$, we can find $x=x(B)\in D\cap U$ (independent of $A$) such that $N_T(x,U)\cap S\neq \emptyset$,
where $A$ is
thick and $B$ is dynamical syndetic.
\end{thm}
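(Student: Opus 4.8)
The plan is to deduce this theorem directly from Theorem \ref{new-main} together with the characterization of central sets in Theorem \ref{C-thm}, so that the entire content reduces to the elementary duality between syndetic and thick subsets of $\Z_+$. Recall that $N_{T\times S}((x,y),U\times V)=N_T(x,U)\cap N_S(y,V)$, so the condition in Theorem \ref{new-main} says: there is $x\in D\cap U$ with $N_T(x,U)\cap N_S(y,V)$ syndetic. Since by Theorem \ref{new-main} transitive disjointness from all minimal systems is equivalent to weak mixing together with that condition, and since weak mixing appears verbatim in both statements, it suffices to show that, for a fixed countable dense $D$, the condition of Theorem \ref{new-main} is equivalent to the condition of the present theorem. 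The only external input is the standard fact that a set $F\subset\Z_+$ is syndetic if and only if $F\cap A\neq\emptyset$ for every thick set $A\subset\Z_+$: if the gaps of $F$ are bounded by $N$ then $F$ meets every interval of length $N$ contained in a thick set, and conversely if $F$ is not syndetic then its complement is thick and is missed by $F$.

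For the forward direction, let $U$ be nonempty open and let $A\cap B$ be central, with $A$ thick and $B$ dynamical syndetic. Fix once and for all a witnessing minimal system $(Y,S)$, a point $y\in Y$, and an open neighbourhood $V$ of $y$ with $N_S(y,V)\subset B$; this data depends on $B$ only. Applying the condition of Theorem \ref{new-main} to $(Y,S)$, $y$, $V$ and $U$ produces $x\in D\cap U$, depending only on $B$ and $U$, such that $N_T(x,U)\cap N_S(y,V)$ is syndetic. Intersecting this syndetic set with the thick set $A$ yields, by the duality, a point $n\in N_T(x,U)\cap N_S(y,V)\cap A\subset N_T(x,U)\cap(A\cap B)$; hence $N_T(x,U)\cap(A\cap B)\neq\emptyset$ with $x=x(B)$ independent of $A$, as required.

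For the converse, suppose the condition of the present theorem holds and let $(Y,S)$ be minimal, $y\in Y$, $V$ an open neighbourhood of $y$, and $U$ nonempty open. Put $B=N_S(y,V)$, which is dynamical syndetic by definition. For every thick set $A$ the intersection $C_A=A\cap B$ is central, so the hypothesis furnishes a point $x=x(B)\in D\cap U$, crucially independent of $A$, with $N_T(x,U)\cap C_A\neq\emptyset$, that is $N_T(x,U)\cap B\cap A\neq\emptyset$. Since the same $x$ works for every thick $A$, the set $N_T(x,U)\cap B$ meets every thick set, and is therefore syndetic by the duality. As $B=N_S(y,V)$, this is exactly the conclusion of Theorem \ref{new-main}, completing the equivalence.

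The substantive point, and the only place where the argument is not purely formal, is the role of the clause ``$x=x(B)$ independent of $A$'' in the hypothesis. It is precisely this uniformity that permits, in the converse direction, the passage from ``$N_T(x,U)\cap B$ meets each individual thick set $A$ (with $x$ a priori depending on $A$)'' to ``a single $N_T(x,U)\cap B$ meets all thick sets simultaneously'', which is what the syndeticity criterion demands. I would therefore take care to state the duality lemma cleanly and to emphasize that the witnessing data for a dynamical syndetic set $B$ is fixed before the thick set $A$ is introduced, so that the resulting $x$ genuinely depends on $B$ alone.
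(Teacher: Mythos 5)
Your proposal is correct and takes essentially the same route as the paper: the paper likewise deduces the theorem from Theorem \ref{train2} together with the decomposition $\F_C=\F_{dps}$ of Theorem \ref{C-thm} and the syndetic--thick duality, its Lemma \ref{CC} being exactly your forward direction (fix the witnessing data for $B$, get $x=x(B)$, intersect the syndetic set with the thick set $A$). Your emphasis on the independence-of-$A$ clause as the key to upgrading ``meets each thick set'' to ``syndetic'' in the converse is precisely how the paper's proof of sufficiency proceeds as well.
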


It is clear that the above two theorems can be stated for a transitive system disjoint from a given
minimal system. The following theorem was proved in \cite[Corollary 4.6]{HY05}, which can be considered as an answer to Furstenberg's problem in the general case.

\begin{thm} \label{hunagye05} Let $(X,T)$ be a t.d.s. Then $(X,T)\perp \mathcal{M}$ if and only if for
any minimal system $(Y,S)$, there exist countably many transitive subsystems of $(X,T)$ such that their union is dense in
$X$ and each of them is disjoint from $Y$.
\end{thm}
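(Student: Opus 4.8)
The plan is to prove both implications with a minimal system $(Y,S)$ fixed throughout, using the elementary but crucial reformulation that, because $Y$ is minimal, a nonempty closed $(T\times S)$-invariant set $J\subseteq X\times Y$ is a joining of $X$ and $Y$ precisely when its projection to $X$ is onto: the projection to $Y$ is automatically onto, being a nonempty closed invariant subset of the minimal system $Y$. The same remark holds with $X$ replaced by any subsystem $Z\subseteq X$. I also record the reduction lemma that I will lean on: if $Z\subseteq X$ is a transitive subsystem with transitive point $z$, then $Z\perp Y$ if and only if the orbit closure $\overline{\O((z,y),T\times S)}$ equals $Z\times Y$ for every $y\in Y$. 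Indeed, if $Z\perp Y$ then each such orbit closure is a joining of $Z$ and $Y$, hence the full product; conversely, any joining $W$ of $Z$ and $Y$ contains some point $(z,y_0)$ (as $W$ projects onto $Z\ni z$), so $W\supseteq\overline{\O((z,y_0),T\times S)}=Z\times Y$, forcing $W=Z\times Y$.

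For the converse direction, suppose we are given countably many transitive subsystems $\{Z_i\}_{i}$ with $\overline{\bigcup_i Z_i}=X$ and $Z_i\perp Y$ for each $i$, and let me show $X\perp Y$. Let $J$ be any joining of $X$ and $Y$ and set $J_i=J\cap(Z_i\times Y)$, a closed $(T\times S)$-invariant set. Since $J$ projects onto $X\supseteq Z_i$, for every $x\in Z_i$ there is $y$ with $(x,y)\in J\cap(Z_i\times Y)=J_i$, so $J_i$ projects onto $Z_i$; by the reformulation $J_i$ is then a joining of $Z_i$ and $Y$, whence $J_i=Z_i\times Y$ by $Z_i\perp Y$. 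Thus $Z_i\times Y\subseteq J$ for all $i$, so $(\bigcup_i Z_i)\times Y\subseteq J$, and taking closures gives $X\times Y=\overline{\bigcup_i Z_i}\times Y\subseteq J$. Hence $J=X\times Y$, that is $X\perp Y$; as $Y$ was an arbitrary minimal system, $X\perp\mathcal{M}$. This direction is entirely formal.

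For the forward direction, assume $X\perp\mathcal{M}$, so in particular $X\perp Y$, and, by the result quoted in the introduction, the recurrent points of $X$ are dense. Since $X$ is separable, it suffices to prove that the set $D_Y=\{x\in X:x\text{ is recurrent and }\overline{\O(x,T)}\perp Y\}$ is dense in $X$: any countable dense subset $\{x_i\}\subseteq D_Y$ then yields transitive subsystems $Z_i=\overline{\O(x_i,T)}$ with $Z_i\perp Y$ and $\overline{\bigcup_i Z_i}\supseteq\overline{\{x_i\}}=X$. By the reduction lemma, $x\in D_Y$ exactly when $x$ is recurrent and $\overline{\O((x,y),T\times S)}=\overline{\O(x,T)}\times Y$ for every $y\in Y$, so the entire problem is to produce, inside each nonempty open $U\subseteq X$, a recurrent point whose orbit closure spreads to the full fibred product over every $y$. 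Note that one cannot simply take an arbitrary recurrent point: a minimal subsystem of $X$ isomorphic to $Y$ is transitive but not disjoint from $Y$, so the selection of the subsystems is essential.

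The hard part is exactly this density, and within it the uniformity over all $y\in Y$. My plan is to argue by contradiction through the reformulation of disjointness: if $D_Y$ missed some nonempty open $U$, then every recurrent $x\in U$ would carry a proper closed invariant $W_x\subsetneq \overline{\O(x,T)}\times Y$ still projecting onto $\overline{\O(x,T)}$, and I would try to amalgamate these fibrewise-proper pieces, together with the full product over the complement of $\overline{\bigcup_{x\in U}\O(x,T)}$, into a single closed $(T\times S)$-invariant subset of $X\times Y$ that projects onto $X$ yet is proper, contradicting $X\perp Y$. Making the amalgamation closed, invariant, onto $X$, and genuinely proper is the delicate point; I would control it with a Baire-category and enveloping-semigroup argument, fixing a minimal idempotent $u$ in the Ellis semigroup of $(X\times Y,T\times S)$ to transport transitivity of $\overline{\O((x,y_0),T\times S)}$ from one fibre $y_0$ to all fibres via the minimality of $Y$, thus upgrading ``transitive over some $y$'' to ``transitive over every $y$'' while showing the good points are residual. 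I expect this gluing-and-uniformity step to be where essentially all the difficulty lies; the two implications above are routine once it is in hand.
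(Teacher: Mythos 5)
Your backward (sufficiency) implication is complete and correct: the observation that, over a minimal $(Y,S)$, a nonempty closed $(T\times S)$-invariant set is a joining as soon as it projects onto the first factor makes the argument $J_i=J\cap(Z_i\times Y)=Z_i\times Y$ work verbatim, and your reduction lemma for transitive subsystems is also sound. Note that the paper itself gives no proof of this statement: it is quoted from \cite[Corollary 4.6]{HY05}, so the comparison is with the argument there.

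The forward (necessity) implication, however, is not proved, and the plan you sketch breaks at exactly the point you flag. If $D_Y\cap U=\emptyset$, each recurrent $x\in U$ yields a proper closed invariant $W_x\subsetneq\overline{orb(x,T)}\times Y$ projecting onto $\overline{orb(x,T)}$, but properness of $W_x$ is \emph{relative to the subsystem} $\overline{orb(x,T)}$: it says only that $W_x$ misses a set of the form $\bigl(U_x\cap\overline{orb(x,T)}\bigr)\times V_x$, where $U_x$, $V_x$, and the ambient subsystem all vary with $x$. There is no fixed nonempty open subset of $X\times Y$ avoided by all the pieces, and the closure of a union of proper sets need not be proper; since each $W_x$ projects onto a set containing $x$, the closure of your amalgam automatically projects onto $X$, and nothing prevents it from being all of $X\times Y$, in which case there is no contradiction. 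Compare the proof of Theorem \ref{train2} in the transitive case: there the negation of the hypothesis hands you a single fixed box $U\times V$ such that $N_{T\times S}((x,y),U\times V)$ is non-syndetic for every relevant $x$, and thickness of the complementary return times produces points whose entire $T\times S$-orbit lies in the fixed closed set $(U\times V)^c$, so that $J\subseteq(U\times V)^c$ is proper. In your non-transitive setting the negation delivers no such common box, and the proposed repair via a minimal idempotent in the Ellis semigroup does not supply one: idempotents transport minimality and proximality between fibres, not transitivity of orbit closures, and ``$\overline{orb((x,y),T\times S)}=\overline{orb(x,T)}\times Y$ for one $y$'' does not imply it for all $y$ --- that uniformity over $y$ is precisely the syndeticity condition of Theorem \ref{train-y}. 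The proof in \cite{HY05} avoids the gluing problem altogether by working combinatorially through the m-set characterization (Proposition \ref{hy-main} here): disjointness of an orbit closure from $Y$ is encoded as $N_T(x,U)\in\mathcal{F}_Y^*$ for every neighborhood $U$ of $x$, and one shows that when $(X,T)\perp(Y,S)$ the $\mathcal{F}_Y^*$-recurrent points are dense, their orbit closures furnishing the desired countable family. Until you either import that characterization or find a substitute for the uniform avoided box, the forward direction remains open in your write-up.
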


As a direct application we have
\begin{prop}\label{C-set}
Let $(X,T)$ be a weakly mixing system. If for each nonempty open subset $U$ of $X$
there is $x\in U$ such that $N_T(x,U)$ is a $C^*$-set, then $(X,T)$ is disjoint from
all minimal systems, i.e. $(X,T)\in \mathcal{M}^\perp$.
\end{prop}

As other applications we show that if a transitive system $(X,T)$ is disjoint from all minimal systems, then so are $(X^n,T^{(n)})$ and $(X, T^n)$
for any $n\in \N$, where $T^{(n)}=T\times \ldots \times T $ ($n$ times). Combining this result with other results it turns out that a transitive system $(X,T)$  is disjoint from all minimal systems
if and only if the hyperspace system $(K(X),T_K)$  is disjoint from all minimal systems.

\begin{rem} We remark that in \cite{HY05} Huang and Ye gave a necessary and sufficient condition for a transitive system
being disjoint from all minimal systems through the notion of m-set (see Proposition \ref{hy-main}). The condition given in this paper
is easier to handle than that one.
%\item Our result can not be extended to dynamical systems without the transitivity assumption directly.
%For the results and the difficulty of Furstenberg's problem in the general case we refer to \cite{HY05}.
%\end{enumerate}
\end{rem}

\medskip
The paper is organizing as follows. In Section 2 we will give some preliminaries and we prove our main results in Section 3.
Then in Section 4 we give some applications of our results.
%In the final section we will consider our results for abelian group actions.

\medskip
\noindent {\bf Acknowledgments.} The authors would like to thank P. Oprocha for sharing us the early version of his recent paper. We also thank the referee for the very careful reading and many useful comments, which help us to improve the writing of the paper.

\section{Preliminary}

In the article, integers, nonnegative integers and natural numbers
are denoted by $\Z$, $\Z_+$ and $\N$ respectively.

\subsection{Topological dynamical system}\
\medskip

By a {\em topological dynamical system} (t.d.s.) we mean a pair $(X,T)$, where
$X$ is a compact metric space (with metric $d$) and $T:X\to X$ is
continuous and surjective. A nonempty closed invariant subset $Y
\subset X$ defines naturally a {\em subsystem} $(Y,T)$ of $(X,T)$.

The {\em orbit} of $x$, $orb(x,T)$ (or simply $orb(x)$), is the set
$\{T^nx: n\in \Z_+\}$.
A t.d.s. $(X,T)$ is {\it transitive} if for each pair of nonempty open subsets $U$ and $V$, $$N_T(U,V)=\{n\in\Z_+: U\cap T^{-n}V\not=\emptyset\}$$
is infinite. Equivalently, $(X,T)$ is transitive if and only if
there exists $x\in X$ such that
$\overline{orb(x,T)}=X$; such $x$ is called a {\em transitive
point}, and the set of transitive points is denoted by $Tran_T$. It
is well known that if a system $(X,T)$ is transitive
then $Tran_T$ is a dense $G_\delta$ set. A system $(X,T)$ is {\it weakly
mixing} if $(X\times X, T\times T)$ is transitive.

A t.d.s. $(X,T)$ is {\em minimal} if $Tran_T=X$. Equivalently,
$(X,T)$ is minimal if and only if it contains no proper subsystems.
A point $x \in X $ is {\em minimal} or {\em almost periodic} if the
subsystem $(\overline{orb(x,T)},T)$ is minimal.

A t.d.s. $(X,T)$ is an $M$-{\it system} if it is transitive and the
set of minimal points is dense.

\medskip

Let $(X,T)$ be a t.d.s. and $(x,y)\in X^2$. It is a {\it proximal}
pair if there is a sequence $\{n_i\}$ in $\Z_+$ such that
$\lim_{i\to +\infty} T^{n_i} x =\lim_{i\to +\infty} T^{ n_i} y$; and
it is a {\it distal} pair if it is not proximal. Denote by $P(X,T)$
or $P_X$ the set of all proximal pairs of $(X,T)$. For a point $x\in X$, $P[x]=\{y\in X: (x,y)\in P(X,T)\}$ is called {\em proximal cell} of $x$.
A point $x$ is
said to be {\em distal} if whenever $y$ is in the orbit closure of
$x$ and $(x,y)$ is proximal, then $x = y$.

A t.d.s. $(X,T)$ is
called {\it distal} if $(x,x')$ is distal whenever $x,x'\in X$ are
distinct.
A t.d.s. $(X,T)$ is {\it equicontinuous} if for every $\ep>0$ there
exists $\d>0$ such that $d(x_1,x_2)< \d$ implies
$d(T^nx_1,T^nx_2)<\ep$ for every $n\in \Z_+$. As we assume that $T$ is surjective, it is easy to see that
each equicontinuous system is distal.

\medskip

For a t.d.s. $(X,T)$, $x\in X$ and $U\subset X$ let
$$N_T(x,U)=N(x,U)=\{n\in \Z_+: T^nx\in U\}.$$
A point $x\in X$ is said to be {\em recurrent} if for every neighborhood $U$ of $x$, $N_T(x,U)$ is infinite.

%Equivalently, $x\in X$ is recurrent if and only if $x\in \w(x,T)$, i.e. there is a strictly increasing subsequence $\{n_i\}$ of $\N$ such that $T^{n_i}x\lra x$. Denote by $R(X,T)$ the set of all recurrent points of $(X,T)$.

\subsection{Furstenberg families}\
\medskip

Let us recall some notions related to Furstenberg families (for
details see \cite{Fur81}). Let $\P=\P({\Z}_{+})$ be the collection
of all subsets of $\Z_+$. A subset $\F$ of $\P$ is a {\em
(Furstenberg) family}, if it is hereditary upwards, i.e. $F_1
\subset F_2$ and $F_1 \in \F$ imply $F_2 \in \F$. A family $\F$ is
{\it proper} if it is a proper subset of $\P$, i.e. neither empty
nor all of $\P$. It is easy to see that $\F$ is proper if and only
if ${\Z}_{+} \in \F$ and $\emptyset \notin \F$. Any subset
$\mathcal{A}$ of $\P$ can generate a family $[\mathcal{A}]=\{F \in
\P:F \supset A$ for some $A \in \mathcal{A}\}$. If a proper family
$\F$ is closed under intersection, then $\F$ is called a {\it
filter}. For a family $\F$, the {\it dual family} is
$$\F^*=\{F\in\P: {\Z}_{+} \setminus F\notin\F\}=\{F\in \P:F \cap F' \neq
\emptyset \ for \ all \ F' \in \F \}.$$ $\F^*$ is a family, proper
if $\F$ is. Clearly, $(\F^*)^*=\F$ and ${\F}_1\subset {\F}_2$ implies that ${\F}_2^* \subset {\F}_1^*.$
Denote by $\F_{inf}$ the family consisting of all infinite subsets of $\Z_+$.

\subsection{$\F$-recurrence and some important families}\
\medskip

Let $\F$ be a family and $(X,T)$ be a t.d.s. We say $x\in X$ is
$\F$-{\it recurrent} if for each neighborhood $U$ of $x$, $N_T(x,U)\in
\F$. So the usual recurrent point is just $\F_{inf}$-recurrent one.

%\medskip

%Recall that a t.d.s. $(X,T)$ is

%\begin{enumerate}

%\item[$\bullet$] an $E$-{\it system} if it is transitive and has an invariant measure $\mu$ with full support, i.e., $supp(\mu)=X$;

%\item[$\bullet$] an $M$-{\it system} if it is transitive and the set of minimal points is dense; and

%\item[$\bullet$] a $P$-system if it is transitive and the set of periodic points is dense.
%\end{enumerate}

A subset $S$ of $\Z_+$ is {\it syndetic} if it has a bounded gaps,
i.e. there is $N\in \N$ such that $\{i,i+1,\cdots,i+N\} \cap S \neq
\emptyset$ for every $i \in {\Z}_{+}$. $S$ is {\it thick} if it
contains arbitrarily long runs of positive integers, i.e. there is a
strictly increasing subsequence $\{n_i\}$ of $\Z_+$ such that
$S\supset \bigcup_{i=1}^\infty \{n_i, n_i+1, \ldots, n_i+i\}$. The
collection of all syndetic (resp. thick) subsets is denoted by
$\F_s$ (resp. $\F_t$). Note that $\F_s^*=\F_t$ and $\F_t^*=\F_s$.

A classic result stated that $x$ is a minimal point if and only if
$N_T(x,U)\in \F_s$ for any neighborhood $U$ of $x$ \cite{GH}. And a t.d.s.
$(X,T)$ is weakly mixing if and only if $N_T(U,V)\in \F_t$ for any
nonempty open subsets $U,V$ of $X$ \cite{Fur67, Fur81}.

A subset $S$ of $\Z_+$ is {\it piecewise syndetic} if it is an
intersection of a syndetic set with a thick set. Denote the set of
all piecewise syndetic sets by $\F_{ps}$. It is known that a t.d.s.
$(X,T)$ is an $M$-{\it system} if and only if there is a transitive
point $x$ such that $N_T(x,U)\in \F_{ps}$ for any neighborhood $U$ of
$x$ (see for example \cite[Lemma 2.1]{HY05}).

Let $\{ p_i \}_{i=1}^\infty$ be a sequence in
$\mathbb{N}$. One defines $$FS(\{ p_i \}_{i=1}^\infty)=
\Big\{\sum_{i\in \alpha} p_i: \alpha \text{ is a
nonempty finite subset of } \N\Big \}.$$ $F$ is an {\it IP set} if it
contains some $FS({\{p_i\}_{i=1}^{\infty}})$, where $p_i\in\N$.
The collection of all IP sets is denoted by $\F_{ip}$. A subset of
$\N$ is called an {\it ${\text{IP}}^*$-set}, if it has nonempty
intersection with any IP-set. It is known that a point $x$ is a
recurrent point if and only if $N_T(x,U)\in \F_{ip}$ for any
neighborhood $U$ of $x$, and $x$ is distal if and only if $x$ is
$IP^*$-recurrent \cite{Fur81}.

\subsection{Systems $(K(X),T_K)$ and $(M(X), T_M)$}\
\medskip

Let $(X,T)$ be a topological dynamics
and let $\B(X)$ be the Borel $\sigma$-algebra of $X$. Let $M(X)$ be the collection of all Borel probability measures on $X$ with the weak$^*$ topology. Then
$T$ induces a map $T_M$ on $M(X)$ naturally by sending $\mu\in M(X)$ to $T\mu$, where $T\mu$ is defined by $T\mu(A)=\mu(T^{-1}A)$ for all $A\in \B(X)$.

Let $K(X)$  be the space of all nonempty closed subsets of $X$ endowed with
Hausdorff metric, and $T_K : K(X)\rightarrow  K(X)$  be the  induced map defined by $T_K(A) =
T(A)=\{Tx: x\in A\}$ for any $A \in  K(X)$.

\subsection{$m$-sets}\
\medskip

The notion of $m$-set was introduced in \cite{HY05}.

\begin{de} A subset $A$ of $\Z_{+}$ is called an {\it
$m$-set}, if there exist a minimal system $(Y,S)$, $y\in Y$ and
a nonempty open subset $V$ of $Y$ such that $A\supset N_S(y,V)$.
\end{de}

Recall that a subset $A$ of $\Z_{+}$ is a dynamical syndetic set, if there exist a minimal system $(Y,S)$, $y\in Y$ and an open neighbourhood $V_y$ of $y$ such that $A\supset N_S(y,V_y)$. 
Let $A$ be a $m$-set. Then there exist a minimal system $(Y,S)$, $y\in Y$ and a nonempty open subset $V$ of $Y$ such that $A\supset N_S(y,V)$. Since $(Y,S)$ is minimal, there exists some $k$ such that $S^ky\in V$. Thus $S^{-k}V$ is an open neighbourhood of $y$ and $N_S(y, S^{-k}V)$ is a dynamical syndetic set. Note that $N_S(y,V)\supset N_S(y,S^{-k}V)+k$. It follows that each $m$-set is a translation of some dynamical syndetic set.

\medskip

For a transitive system whether it is in ${\mathcal M}^\perp$ can be checked
through $m$-sets as the following theorem shows.
For a minimal dynamical system $(Y,S)$, we define
\begin{align*}
&\mathcal{F}_Y=\{ A\subset \Z_+:A\supset N_S(y,V) \text{ for some } y\in
Y\text{ and nonempty open subset }\ V\}\ \text{and}\\
& \mathcal{F}^*_Y=\{ B\subset \Z_+:B\cap A \not=\emptyset \text{ for
each } A \in \mathcal{F}_Y \}.
\end{align*}

\begin{prop} \label{hy-main}
Let $(X,T)$ be a transitive system and
$x\in \text{Trans}_T$. Then

\item {(1)} $(X,T)\in {\mathcal M}^\perp$ if and only if $N_T(x,U)\cap A \not=\emptyset$
for any neighborhood $U$ of $x$ and any m-set $A$.

\item {(2)} $(X,T)\perp (Y,S)$ if and only if for any open neighborhood $U$
of $x$, one has $N_T(x,U)\in  \mathcal{F}^*_Y$.
\end{prop}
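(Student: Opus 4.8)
The plan is to prove part (2) first and then deduce part (1), since disjointness from the whole class $\mathcal{M}$ is disjointness from each minimal system in turn. The conceptual engine is an elementary observation: because $x$ is a transitive point, the orbit closure $\overline{orb((x,y),T\times S)}$ projects onto $X$ — its image under the first projection is closed, being a continuous image of a compact set, and it contains the dense orbit of $x$ — and, because $(Y,S)$ is minimal, its image under the second projection is a nonempty closed invariant subset of $Y$, hence all of $Y$; thus each such orbit closure is a joining of $(X,T)$ and $(Y,S)$. Consequently $(X,T)\perp(Y,S)$ is \emph{equivalent} to $\overline{orb((x,y),T\times S)}=X\times Y$ for every $y\in Y$: the forward implication is immediate, and conversely any joining $J$ meets the fibre over $x$ in some $(x,y_0)$, whence $J\supseteq\overline{orb((x,y_0))}=X\times Y$. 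Unwinding density, $\overline{orb((x,y))}=X\times Y$ says exactly that $N_T(x,U)\cap N_S(y,V)\neq\emptyset$ for all nonempty open $U\subseteq X$ and $V\subseteq Y$. With this dictionary the forward direction of (2) is free: assuming disjointness, $N_T(x,U)$ meets every $N_S(y,V)$, hence every member of $\mathcal{F}_Y$, so $N_T(x,U)\in\mathcal{F}^*_Y$ — and this even for arbitrary open $U$, in particular every neighbourhood of $x$.

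The reverse direction of (2) is where the real work lies, and I expect it to be the main obstacle. The hypothesis only controls $N_T(x,U)$ when $U$ is a \emph{neighbourhood of $x$}, whereas I must produce $N_T(x,U)\cap N_S(y,V)\neq\emptyset$ for \emph{arbitrary} nonempty open $U\subseteq X$. I would close this gap using transitivity: given such a $U$, pick $m$ with $T^m x\in U$, so that $U':=T^{-m}U$ is an open neighbourhood of $x$ and $k\in N_T(x,U')\iff k+m\in N_T(x,U)$. Now apply the hypothesis to the neighbourhood $U'$ and to the \emph{shifted} point $S^m y$ (this shift is the one clever move): since $N_S(S^m y,V)\in\mathcal{F}_Y$, there is $k\in N_T(x,U')\cap N_S(S^m y,V)$, and then $k+m$ lies in $N_T(x,U)\cap N_S(y,V)$ because $T^{k+m}x\in U$ and $S^{k+m}y=S^k(S^m y)\in V$. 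This yields density of every orbit closure $\overline{orb((x,y))}$, hence disjointness. Only continuity is used here; surjectivity of $T$ and $S$ is not needed.

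Finally part (1) follows formally from (2). By definition $(X,T)\in\mathcal{M}^\perp$ means $(X,T)\perp(Y,S)$ for every minimal $(Y,S)$, which by (2) is the assertion that $N_T(x,U)\in\mathcal{F}^*_Y$ for every neighbourhood $U$ of $x$ and every minimal $(Y,S)$. Reordering the quantifiers, this requires $N_T(x,U)$ to meet $N_S(y,V)$ for all minimal $(Y,S)$, all $y\in Y$ and all nonempty open $V$. Since an $m$-set is by definition any superset of such an $N_S(y,V)$, while each $N_S(y,V)$ is itself an $m$-set, the condition ``$N_T(x,U)$ meets every $m$-set'' is literally the same statement, which is the claim of (1). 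I anticipate the only points needing genuine care are verifying that the orbit closure is truly a joining (both projections onto) and the index bookkeeping of the shift by $m$ in the reverse direction of (2); everything else is a routine translation between orbit density and intersections of return-time sets.
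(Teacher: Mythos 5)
Your proof is correct. The paper itself states Proposition \ref{hy-main} as a quotation from \cite{HY05} without reproducing a proof, and your argument---that the orbit closure of $(x,y)$ over a transitive point $x$ is automatically a joining when $(Y,S)$ is minimal, together with the shift trick $U'=T^{-m}U$, $y\mapsto S^m y$ to upgrade the hypothesis from neighbourhoods of $x$ to arbitrary nonempty open $U$---is essentially the standard argument from that reference, carried out correctly.
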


We say that $(X,T)$ is {\it strongly disjoint from all minimal systems} if $(X^n, T^{(n)})$
is disjoint from all minimal systems for any $n\in\N$, where $T^{(n)}=T\times \ldots \times T $ ($n$ times). Then we have

\begin{prop} \cite{LYY15}\label{JYY-thm}
Let $(X, T)$ be a t.d.s. Then
\begin{enumerate}
\item If $(K(X),T_K)$ is weakly mixing and is disjoint from all minimal systems, then $(X,T)$ is weakly mixing and is disjoint from all minimal systems.

\item  If $(X,T)$ is strongly disjoint from all minimal systems, then both $(K(X),T_K)$ and $(M(X),T_M)$ are disjoint from all minimal systems.
\end{enumerate}
\end{prop}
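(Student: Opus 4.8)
The plan is to prove the two parts by dual manipulations of joinings: part (2) by pushing joinings down to factors, and part (1) by lifting joinings up from a subsystem. As a preliminary I would record the elementary fact that disjointness from a fixed minimal system passes to factors: if $\pi\colon(Z,R)\to(W,Q)$ is a factor map and $(Z,R)\perp(Y,S)$ with $(Y,S)$ minimal, then for any joining $J\subseteq W\times Y$ the preimage $(\pi\times\id)^{-1}(J)$ is a joining of $Z$ and $Y$, hence equals $Z\times Y$, and applying $\pi\times\id$ gives $J=W\times Y$; thus $(W,Q)\perp(Y,S)$.

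For part (2) I would use the finite-set and empirical-measure factors of the powers of $X$. For each $n\in\N$ define $\pi_n\colon X^n\to K(X)$ by $\pi_n(x_1,\dots,x_n)=\{x_1,\dots,x_n\}$ and $\rho_n\colon X^n\to M(X)$ by $\rho_n(x_1,\dots,x_n)=\frac1n\sum_{i=1}^n\delta_{x_i}$. Both maps are continuous and equivariant, so their images $K_n(X)$ and $M_n(X)$ are closed invariant subsystems of $(K(X),T_K)$ and $(M(X),T_M)$ and are factors of $(X^n,T^{(n)})$. Strong disjointness gives $(X^n,T^{(n)})\in\mathcal{M}^\perp$; since $(X,T)$ is weakly mixing each $X^n$ is transitive, hence so are $K_n(X)$ and $M_n(X)$, and by the preliminary fact each is disjoint from every minimal system. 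Now $\bigcup_nK_n(X)$ is the set of finite subsets of $X$ and $\bigcup_nM_n(X)$ contains all uniform empirical measures, so these unions are dense in $K(X)$ and $M(X)$ respectively. For any minimal $(Y,S)$ we have thereby exhibited countably many transitive subsystems with dense union, each disjoint from $Y$, and Theorem~\ref{hunagye05} yields $(K(X),T_K),(M(X),T_M)\in\mathcal{M}^\perp$.

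For part (1) the weak mixing of $(X,T)$ is the classical fact that transitivity of $(K(X),T_K)$ (a consequence of its weak mixing) already forces $(X,T)$ to be weakly mixing. For the disjointness I would argue contrapositively, noting first why the naive route fails: $(X,T)$ embeds in $(K(X),T_K)$ only as the subsystem of singletons, and disjointness does not restrict to subsystems. Instead I lift a joining upward. Suppose $(X,T)\not\perp(Y,S)$ for some minimal $(Y,S)$, witnessed by a proper joining $J\subsetneq X\times Y$ with fibres $J_y=\{x:(x,y)\in J\}$, and set
\[
\hat J=\{(F,y)\in K(X)\times Y:\ F\cap J_y\neq\emptyset\}.
\]
Then $\hat J$ is closed and invariant, it projects onto $K(X)$ (given $F$, pick $x\in F$ and $y$ with $(x,y)\in J$) and onto $Y$ (take $F=J_y$), so it is a joining of $K(X)$ and $Y$; and it is proper because testing against singletons gives $(\{x\},y)\in\hat J\iff(x,y)\in J$, so $\hat J=K(X)\times Y$ would force $J=X\times Y$. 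This contradicts $(K(X),T_K)\perp(Y,S)$, completing part (1).

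The main obstacle is precisely this disjointness descent in part (1). The crucial design choice is to build $\hat J$ from the relation ``$F$ meets the fibre $J_y$'' rather than ``$F\subseteq J_y$'': the former surjects onto $K(X)$ (the latter does not, failing already for large $F$), while singletons still detect that $\hat J$ is proper. The remaining points are routine: upgrading forward invariance of $\hat J$ to exact invariance by passing to $\bigcap_n(T_K\times S)^n\hat J$ if needed, verifying the density of finite subsets in $K(X)$ and of uniform empirical measures in $M(X)$, and recording that $(X,T)$ is weakly mixing in the relevant setting so that the powers $X^n$ are transitive.
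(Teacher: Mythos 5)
Your part (1) is correct and self-contained. The preliminary fact (disjointness from a minimal system passes to factors) is standard and verified correctly; the lift $\hat J=\{(F,y)\in K(X)\times Y:\ F\cap J_y\neq\emptyset\}$ is closed (if $x_k\in F_k\cap J_{y_k}$ and $(F_k,y_k)\to(F,y)$, any limit point $x$ of $(x_k)$ satisfies $(x,y)\in J$ and $x\in F$, since $d(x_k,F)$ is bounded by the Hausdorff distance from $F_k$ to $F$), forward invariant, projects onto $K(X)$ and onto $Y$ exactly as you say, and the singleton test shows $\hat J$ is proper precisely when $J$ is; together with the classical fact that transitivity of $(K(X),T_K)$ forces weak mixing of $(X,T)$, this settles (1). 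Note that the paper itself offers no proof of this proposition, citing \cite{LYY15}, so you are being measured against the cited argument; your choice of the relation ``$F$ meets $J_y$'' is exactly the right descent mechanism, and your disjointness argument does not even use the weak mixing hypothesis.

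Part (2), however, has a genuine gap. The proposition carries no transitivity hypothesis on $(X,T)$, and your step ``since $(X,T)$ is weakly mixing each $X^n$ is transitive, hence so are $K_n(X)$ and $M_n(X)$'' is unjustified and in general false: Lemma \ref{huang-ye-05} gives weak mixing only for \emph{transitive} systems, and strong disjointness does not imply transitivity. Concretely, the identity on a two-point space is strongly disjoint from all minimal systems --- any joining $J\subseteq X^n\times Y$ contains a point $(p,y_p)$ over each (fixed) point $p\in X^n$, and invariance, closedness and minimality of $(Y,S)$ force $\{p\}\times Y\subseteq J$, so $J=X^n\times Y$ --- yet neither $X^n$ nor $K_n(X)$ is transitive there, so Theorem \ref{hunagye05}, whose ``if'' direction requires \emph{transitive} subsystems, cannot be invoked on the $K_n(X)$ as you do. The gap is repairable within your own framework in two ways. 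Either argue directly with joinings: for a joining $J\subseteq K(X)\times Y$ with $(Y,S)$ minimal, the set $J\cap(K_n(X)\times Y)$ is a joining of $(K_n(X),T_K)$ with $(Y,S)$ (it projects onto $K_n(X)$ because $J$ projects onto all of $K(X)$, and onto $Y$ by minimality), so its preimage under $\pi_n\times\id$ is a joining of $(X^n,T^{(n)})$ with $(Y,S)$, hence equals $X^n\times Y$ by strong disjointness, giving $K_n(X)\times Y\subseteq J$; density of $\bigcup_n K_n(X)$ then forces $J=K(X)\times Y$, and the same scheme runs for $M(X)$ via $\rho_n$ (noting only that $M_n(X)\subseteq M_m(X)$ requires $n\mid m$, which is harmless for density). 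Or keep Theorem \ref{hunagye05} but apply its ``only if'' direction to each $K_n(X)$ --- which your factor lemma does place in $\mathcal{M}^\perp$ --- to manufacture the transitive subsystems with dense union that its ``if'' direction demands.
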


\section{Main results}

\subsection{Basic lemmas}\
\medskip

To show the main result we need the following two propositions. The first one was obtained in \cite{HY05} by using
Proposition \ref{hy-main}.

\begin{lem} \label{huang-ye-05}
If a transitive system is disjoint from all minimal systems, then it is weakly mixing and has a dense set of minimal points, i.e. it is a weakly mixing M-system.
\end{lem}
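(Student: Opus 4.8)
The plan is to read off both conclusions from the $m$-set criterion of Proposition \ref{hy-main}(1). Fix a transitive point $x$. Since $(X,T)\in\mathcal M^\perp$, Proposition \ref{hy-main}(1) tells us that for every neighbourhood $U$ of $x$ the return set $N_T(x,U)$ meets every $m$-set. I will also use the following joining observation: for any minimal system $(Y,S)$ and any point $y\in Y$, the orbit closure $\overline{orb((x,y),T\times S)}$ is a nonempty closed invariant set projecting onto $X$ (because $x$ is transitive) and onto $Y$ (because $\overline{orb(y)}=Y$), hence it is a joining; by disjointness it equals $X\times Y$, so $(X\times Y,T\times S)$ is transitive.

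For the $M$-system part I would use the characterisation recalled in the preliminaries: $(X,T)$ is an $M$-system if and only if $N_T(x,U)\in\F_{ps}$ for every neighbourhood $U$ of $x$. Thus it suffices to prove that a subset of $\Z_+$ meeting every $m$-set must be piecewise syndetic, and then apply this to $N_T(x,U)$. I would argue by contraposition: if $N_T(x,U)$ is not piecewise syndetic then its complement is thickly syndetic, and from such a thickly syndetic set one manufactures a minimal system $(Y,S)$, a point $y$ and an open set $V$ whose return set $N_S(y,V)$ (an $m$-set) is disjoint from $N_T(x,U)$, contradicting Proposition \ref{hy-main}(1). Once every $N_T(x,U)$ is piecewise syndetic, the set of minimal points is $T$-invariant and its closure contains the transitive point $x$, hence it is dense; this is the $M$-system property.

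For weak mixing I would reduce, via the criterion of Furstenberg recalled in the preliminaries, to showing that $N_T(U,V)\in\F_t$ for all nonempty open $U,V\subseteq X$. Here I would exploit the joining observation that $(X\times Y,T\times S)$ is transitive for every minimal $(Y,S)$. If some $N_T(U,V)$ failed to be thick, its complement would be syndetic; realising this syndetic obstruction as a return set inside an auxiliary minimal system $(Y,S)$ — using that for minimal systems $N_S(U',V')$ is always syndetic — one produces nonempty open boxes in $X\times Y$ with empty transfer set, contradicting transitivity of $X\times Y$. Hence every $N_T(U,V)$ is thick and $(X,T)$ is weakly mixing.

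The main obstacle is common to both halves: it is the step that turns a purely combinatorial largeness statement about a subset of $\Z_+$ (the complement of a non-piecewise-syndetic set is thickly syndetic; the complement of a non-thick set is syndetic) into a genuine minimal dynamical system whose return-time set is literally placed where it is needed. The delicate point is the mismatch between abstract return-time sets of minimal points and honest subsets of $\Z_+$: one cannot simply pass to the orbit closure of an indicator function, and the construction must be arranged so that the resulting $m$-set sits exactly inside the prescribed set. This is precisely where Proposition \ref{hy-main} and the $m$-set machinery of \cite{HY05} carry the weight; the surrounding family-theoretic bookkeeping with $\F_s$, $\F_t$ and $\F_{ps}$ and their duals is routine.
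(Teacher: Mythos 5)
Your overall architecture is the right one: the paper itself gives no proof of this lemma, citing \cite{HY05} and noting it is obtained from Proposition \ref{hy-main}, and your reduction via a transitive point $x$ and the duals $\F_{ps}^*=\F_{ts}$, $\F_t^*=\F_s$ matches that advertised route (your joining observation that $\overline{orb((x,y),T\times S)}$ is a joining, hence $X\times Y$ is transitive, is also correct). The problem is that both halves of your argument rest entirely on unproved \emph{realization} lemmas which are not contained in Proposition \ref{hy-main} and which you defer to ``the $m$-set machinery of \cite{HY05}'' without stating or proving them. For the $M$-system half you need: every thickly syndetic subset of $\Z_+$ contains a set of the form $N_S(y,V)$ with $(Y,S)$ minimal. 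That single statement \emph{is} the content of the step, and it is genuinely delicate: as you yourself note, one cannot take the orbit closure of an indicator function (it need not contain the right minimal point with returns inside the prescribed set), and, for instance, a Toeplitz-type construction cannot work in general, since occurrence sets of words in Toeplitz sequences contain infinite arithmetic progressions while a thickly syndetic set need not contain any (take the complement of a set $\{d_k\}$ with $d_{k+1}-d_k\to\infty$ that meets every tail of every arithmetic progression). Flagging the obstacle is not the same as crossing it; as written the proof has a hole exactly at its load-bearing point.

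The weak mixing half is in worse shape. To contradict transitivity of $X\times Y$ you need a minimal $(Y,S)$ and nonempty open $U',V'\subseteq Y$ with $N_S(U',V')$ contained in the syndetic complement $E$ of a non-thick $N_T(U,V)$. Since $N_S(U',V')\supseteq N_S(y,V')$ for every $y\in U'$, this amounts to the claim that \emph{every} syndetic set is an $m$-set --- strictly stronger than the thickly syndetic statement needed in the first half --- and you offer no construction at all; the one fact you do invoke (``for minimal systems $N_S(U',V')$ is always syndetic'') is the trivial converse direction and contributes nothing to the required containment. Note that the transfer-time sets of any single minimal system form a very rigid collection (for an irrational rotation they are essentially Bohr-type sets), so the auxiliary system must be custom-built from $E$ --- again precisely the missing construction, and it is not clear the claim in this generality is even true. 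A more promising route to weak mixing, closer to what the literature actually exploits, comes from your own joining observation combined with the already-established $M$-system property: $X$ is disjoint in particular from its \emph{own} minimal subsystems $M\subseteq X$ (which are dense), so $(x,z)$ is a transitive point of $X\times M$ for every $z\in M$, and one works from this leverage rather than from a ``syndetic $\Rightarrow$ $m$-set'' principle. In summary: your proposal is a correct reduction to Proposition \ref{hy-main} plus two combinatorial lemmas, but since neither lemma is proved (and the second is unjustified even as a citation), it is not a proof of the statement.
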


The following lemma was proved in \cite{AK}.

\begin{lem} \label{akin-k}
Let $(X,T)$ be a weakly mixing system. Then each proximal cell is residual, that is, for each $x\in X$, $P[x]$ is residual in $X$.
\end{lem}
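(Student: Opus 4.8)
The plan is to realize each proximal cell as a dense $G_\delta$ set and then invoke the Baire category theorem. First I would record the elementary reformulation that a pair $(x,y)$ is proximal precisely when $\liminf_{n\to\infty}d(T^nx,T^ny)=0$, i.e. when for every $k\in\N$ there is some $n\in\Z_+$ with $d(T^nx,T^ny)<1/k$. Fixing $x$ and setting
\[
U_k=\bigcup_{n\in\Z_+}\{y\in X: d(T^nx,T^ny)<1/k\},
\]
each $U_k$ is open, because $y\mapsto d(T^nx,T^ny)$ is continuous and $U_k$ is a union of open sets, and one checks directly that $P[x]=\bigcap_{k\ge 1}U_k$. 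Thus $P[x]$ is a $G_\delta$ set; since $X$ is a compact metric space it is a Baire space, so it suffices to prove that each $U_k$ is dense in $X$.

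Density of $U_k$ is exactly the following claim $(\ast)$: for every nonempty open $G\subset X$ and every $\delta>0$ there exist $n\in\Z_+$ and $w\in G$ with $d(T^nx,T^nw)<\delta$. Writing $\Delta_\delta=\{(a,b)\in X\times X: d(a,b)<\delta\}$ and $\Omega=\bigcup_{n\in\Z_+}(T\times T)^{-n}\Delta_\delta$, the claim says that the open set $\Omega$ meets the slice $\{x\}\times G$. This is where weak mixing enters. Since $(X,T)$ is weakly mixing, $(X\times X,T\times T)$ is transitive, so $\Omega$ (the set of points whose $T\times T$-orbit eventually enters the nonempty open set $\Delta_\delta$) is a dense open subset of $X\times X$. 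Weak mixing also gives that $N_T(G,V)\in\F_t$ is thick for every nonempty open $V$, so $G$ returns near any prescribed target along arbitrarily long blocks of times.

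The main obstacle is that $\Omega$ being merely dense does not force it to meet the slice $\{x\}\times G$, which has empty interior; equivalently, I must synchronise the thick return times of $G$ with the single prescribed orbit of $x$ (note $x$ could, for instance, be a fixed point, so the argument cannot avoid the orbit of $x$). To overcome this I would exploit the orbit structure of $x$: choose a minimal point $z\in\overline{orb(x,T)}$ that is proximal to $x$ (a standard fact), so that the orbit of $x$ shadows that of $z$ along an infinite set of times, while $z$, being minimal, has syndetic return times $N_T(z,B(z,\eta))\in\F_s$. Since $\F_s=\F_t^{*}$, this syndetic set meets the thick set $N_T(G,B(z,\eta))$, producing a time $n$ and a point $w\in G$ with $T^nz$ and $T^nw$ both in $B(z,\eta)$; combining this with a proximal time of the pair $(x,z)$ and the triangle inequality yields $d(T^nx,T^nw)<\delta$, giving $(\ast)$. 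The delicate point — aligning the proximal times of $(x,z)$ with the syndetic/thick return window at the fixed point $x$ — is the genuine content, and it is carried out in \cite{AK}. Granting $(\ast)$, each $U_k$ is dense, so $P[x]=\bigcap_k U_k$ is residual.
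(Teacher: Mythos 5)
Your reduction is fine up to the last step: $P[x]=\bigcap_k U_k$ with each $U_k$ open is correct, so everything hinges on density of $U_k$, i.e. on the claim $(\ast)$. The problem is exactly at the point you flag and then defer to \cite{AK}: the synchronization does not work as you have set it up. You need a \emph{single} time $n$ lying simultaneously in three sets: the set $A=\{n: d(T^nx,T^nz)<\delta/3\}$ of approximate proximal times of $(x,z)$ (which is indeed thick, by \cite[Lemma 8.1]{Fur81} and uniform continuity), the syndetic set $B=N_T(z,B_{\eta}(z))$, and the thick set $C=N_T(G,B_{\eta}(z))$. A syndetic set meets every thick set, so $B\cap C\neq\emptyset$ and $A\cap B\neq\emptyset$; but thick sets do not form a filter, and $A\cap C$ can be empty (two thick sets can occupy disjoint unions of blocks). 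The standard upgrade does not rescue this: $A\cap B$ is piecewise syndetic, so $N_T(x,B_{\eta}(z))\in \F_{ps}$, but a piecewise syndetic set need not meet a merely thick set --- the dual family of $\F_{ps}$ is the family of \emph{thickly syndetic} sets, and in a general (non-minimal) weakly mixing system $N_T(G,V)$ is only thick, not thickly syndetic. So the ``delicate point'' you acknowledge is not a technicality that the reference fills in along your lines; with the tools you have assembled (thick transfer times plus syndetic returns of a minimal proximal companion) the required intersection simply need not be nonempty.

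The actual argument of \cite{AK} avoids the alignment problem rather than solving it. Weak mixing implies, by Furstenberg's theorem, that every finite product $(X^m,T^{(m)})$ is transitive; covering $X$ by finitely many $\eta$-balls $W_1,\dots,W_m$ and applying transitivity of the $m$-fold product to $G\times\cdots\times G$ against $W_1\times\cdots\times W_m$, one obtains times $n$ at which $T^nG$ is $\eta$-dense in $X$. For such an $n$ there is $w\in G$ with $d(T^nw,T^nx)<2\eta$ \emph{wherever} $T^nx$ happens to be, so no synchronization with the orbit of $x$ (and no minimal point $z$) is needed; a nested-open-sets construction with shrinking radii then yields a point of $G$ proximal to $x$. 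It is instructive to compare with Lemma \ref{wm-ye} of the paper, which is the relative version of this statement: there the hitting sets $N_T(U_{k-1},G_k\cap M)$ are thickly syndetic, which \emph{does} meet the piecewise syndetic set $N_T(x,G_k)$ --- but that strength is available only because $M$ is a \emph{minimal} weakly mixing subsystem, not for the ambient non-minimal $X$. So your route can be made to work inside a minimal weakly mixing subsystem, but not for the lemma as stated; for the general weakly mixing case you need the $\eta$-dense-image property coming from transitivity of all finite products.
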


\subsection{Main results}\
\medskip

Now we are ready to show the main results of the paper.

\begin{thm}\label{train2}
Let $(X,T)$ be a transitive t.d.s. Then the following statements are equivalent.
\begin{enumerate}
\item $(X,T)$ is disjoint from all minimal systems.

\item $(X,T)$ is weakly mixing and there is a countable dense subset $D$ of $X$ such that for any minimal system $(Y,S)$,
any point $y\in Y$ and any open neighbourhood $V$ of $y$, and for any nonempty open subset $U\subset X$, there is $x\in D\cap U$ satisfying that $N_{T\times S}((x,y),U\times V)$ is syndetic.

\item $(X,T)$ is weakly mixing and there is a countable dense subset $D$ of $X$ consisting of minimal points such that for
any minimal system $(Y,S)$, any point $y\in Y$ and any  open neighbourhood $V$ of $y$, and for any nonempty open subset $U\subset X$,
there is $x\in D\cap U$ satisfying that $N_{T\times S}((x,y),U\times V)$ is syndetic.

\end{enumerate}
\end{thm}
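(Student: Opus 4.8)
The plan is to prove the three statements equivalent by the cycle $(3)\Rightarrow(2)\Rightarrow(1)\Rightarrow(3)$. The implication $(3)\Rightarrow(2)$ is immediate, since (3) is just (2) together with the extra requirement that $D$ consist of minimal points. For $(2)\Rightarrow(1)$ I would invoke Oprocha's theorem quoted in the introduction: statement (2) furnishes a single countable dense $D$ that works for every minimal system, which in particular supplies, for each fixed minimal $(Y,S)$, a countable set satisfying Oprocha's hypothesis; hence $(X,T)$ is disjoint from every minimal system. Thus all the content is in $(1)\Rightarrow(3)$.

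For $(1)\Rightarrow(3)$ I would first record, via Lemma \ref{huang-ye-05}, that $(X,T)$ is a weakly mixing $M$-system, so minimal points are dense. The key is a reformulation of the transfer-time condition. Fix a minimal point $x$ and an open $U\ni x$. Since a set is syndetic exactly when it meets every thick set, and since (Theorem \ref{C-thm}) a set is central iff it is the intersection of a thick set with a dynamical syndetic set, one checks the equivalence: $N_T(x,U)\cap N_S(y,V)$ is syndetic for every minimal system $(Y,S)$, every $y\in Y$ and every neighbourhood $V\ni y$ \emph{if and only if} $N_T(x,U)$ meets every central set, i.e.\ $N_T(x,U)$ is a $C^*$-set. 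Indeed $N_S(y,V)$ is dynamical syndetic, so $N_S(y,V)\cap F$ is central for every thick $F$, and conversely every central set contains such an intersection. Consequently, once I produce for each nonempty open $U$ a minimal point $x\in U$ with $N_T(x,U)$ a $C^*$-set, I can fix a countable base $\{U_k\}$ of $X$, choose such an $x_k\in U_k$ for each $k$, and set $D=\{x_k:k\in\N\}$. Then $D$ is a countable dense set of minimal points, and for an arbitrary open $U$ I pick $U_k\subseteq U$; since $N_T(x_k,U)\supseteq N_T(x_k,U_k)$ and the family of $C^*$-sets is hereditary upwards, $N_T(x_k,U)$ is again a $C^*$-set, so by the equivalence $N_T(x_k,U)\cap N_S(y,V)$ is syndetic for every minimal system. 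This yields (3).

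Everything therefore reduces to the following Main Lemma, which I expect to be the main obstacle: \emph{for every nonempty open $U\subseteq X$ there is a minimal point $x\in U$ such that $N_T(x,U)$ is a $C^*$-set}. This is exactly the converse of Proposition \ref{C-set}, and it is where disjointness, weak mixing, and the Akin--Kolyada lemma must all enter. My approach would be to realize a given central set $C$ dynamically: $C\supseteq N_R(z,W)$ for a system $(Z,R)$, a point $z$, a minimal point $z_0\in\overline{orb(z)}$ proximal to $z$, and a neighbourhood $W$ of $z_0$; writing $Z_0=\overline{orb(z_0)}$, disjointness of $(X,T)$ from the minimal system $(Z_0,R)$ forces $(a,z_0)$ to be a transitive point of $X\times Z_0$ for every transitive $a\in X$, so the set $\{n: T^na\in U,\ R^nz_0\in W\}$ is infinite. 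The difficulty is to pass from the transitive point $a$ to a minimal point $x\in U$ and from $z_0$ to $z$ while preserving a common return time. Here I would use Lemma \ref{akin-k} to choose $a\in U\cap Tran_T$ proximal to a prescribed minimal point $x\in U$ (the proximal cell $P[x]$ is residual), and then a minimal idempotent in the enveloping semigroup of $X\times Z$ to simultaneously send $a\mapsto x$ and $z\mapsto z_0$, producing an $n$ with $T^nx\in U$ and $R^nz\in W$. The delicate point---and the reason the theorem genuinely extends the dense-distal-points results of \cite{DSY12,O10,LYY15}---is that $x$ must be merely minimal, not distal: being a $C^*$-set is strictly weaker than being an $IP^*$-set, so such points can exist densely even when distal points do not, as in the example of \cite{LOYZ}. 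Making this idempotent transfer work uniformly for a single minimal $x$ against \emph{all} central sets is the crux of the argument.
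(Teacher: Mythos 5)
Your implications (3)$\Rightarrow$(2) and (2)$\Rightarrow$(1) are fine; for the latter you cite Oprocha's theorem where the paper reproves it (via Lemma \ref{akin-k}: pick one point $x_0$ proximal to every member of the countable set $D$, then combine a syndetic transfer-time set with the thick proximality-time set inside a joining), but that is the same route. The genuine problem is your reduction of (1)$\Rightarrow$(3) to the ``Main Lemma'': \emph{for every nonempty open $U$ there is a minimal point $x\in U$ with $N_T(x,U)$ a $C^*$-set}. Your equivalence via Theorem \ref{C-thm} is correctly stated, but notice what it characterizes: $N_T(x,U)$ being $C^*$ means the \emph{single} point $x$ has syndetic transfer times against \emph{all} minimal systems $(Y,S)$, all $y$ and all $V$ simultaneously. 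Condition (3) only asks for some $x\in D\cap U$ \emph{depending on} $(Y,S)$, $y$ and $V$. So your Main Lemma is strictly stronger than (3), and in fact it is false under hypothesis (1): the paper's Proposition \ref{C-set} shows the $C^*$-condition is sufficient for disjointness, and the unnumbered proposition immediately after it (applied to Oprocha's example from \cite{O17}, a system disjoint from all minimal systems whose minimal points form a dense countable union of nontrivial weakly mixing minimal subsets) produces an open set $U$ such that $N_T(z,U)$ fails to be a $C^*$-set for \emph{every} $z\in U$. Hence (1) cannot imply your Main Lemma, and no refinement of the idempotent-transfer argument you sketch can rescue it; the correct amount of uniformity is the one isolated in Theorem \ref{thm-central}, where $x=x(B)$ may depend on the dynamical syndetic set $B$ (equivalently on $(Y,S),y,V$) and is only required to be independent of the thick set $A$.

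The paper's actual proof of (1)$\Rightarrow$(3) is softer and avoids enveloping-semigroup machinery entirely. By Lemma \ref{huang-ye-05} the system is a weakly mixing $M$-system, so one may take $D$ to be \emph{any} countable dense set of minimal points. If the conclusion fails, there are a fixed minimal $(Y,S)$, $y\in Y$, a neighbourhood $V$ of $y$ and an open $U\subset X$ such that $N_{T\times S}((x,y),U\times V)$ is non-syndetic for every $x\in D\cap U$; then $N_{T\times S}((x,y),(U\times V)^c)$ is thick, and since $x$ is minimal its return times to the balls $B_{1/k}(x)$ are syndetic, so one finds $n_k\in N_T(x,B_{1/k}(x))$ with $n_k,\dots,n_k+k$ in the thick set. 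A limit of $(T\times S)^{n_k}(x,y)$ has the form $(x,y_0)$ with its entire forward orbit in $(U\times V)^c$. Taking $J$ to be the closure of the union over $x\in D\cap U$ of these orbit closures gives a closed invariant set with $p_1(J)\supseteq U$, hence $p_1(J)=X$ by transitivity, and $p_2(J)=Y$ by minimality, i.e.\ a joining contained in $(U\times V)^c$ --- contradicting (1). If you replace your Main Lemma by this per-$(Y,S,y,V)$ contradiction argument, the rest of your outline (countable base, upward heredity of the relevant transfer sets) becomes unnecessary, since a single arbitrary dense set of minimal points already works.
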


\begin{proof}
(3)$\Longrightarrow (2)$. It is clear.

\medskip

(2)$\Longrightarrow (1)$. We follow the proofs in \cite{DSY12, O17}.
By the weak mixing property of $(X,T)$ and countability of $D$, there is a point $x_0\in X$ such that $(x_0,x)$ is proximal for any $x \in D$ by Lemma \ref{akin-k}.

Assume now $J\subset X\times Y$ is a joining. Then there is $y\in Y$ such that $(x_0,y)\in J$.
Let $V$ be an open neighborhood of $y$ and $U$ be any nonempty open set of $X$. Take a nonempty
 open subset $U'$ of $X$ and $\ep>0$ such that $B_\ep(\overline{U'})\subseteq U$. Then by the assumption,
 there is $x\in D\cap U'$ such that $N_{T\times S}((x,y),U'\times V)$ is syndetic. At the same time, since $(x,x_{0})$ is proximal, we have that
$$\{k\in \Z_+: d(T^k(x),T^k(x_{0}))<\ep\}$$
is thick. This implies that there is $k\in\Z_+$ such that
$$S^k(y)\in  V,\ \ T^k(x) \in U',\ \  \text{and}\ d(T^k(x_0),T^k(x))<\ep.$$
So  $T^k(x_0)\in U$. We have $(T^k(x_0),S^k(y))\in J\cap {\overline U}\times \overline {V}$.
Since $U$ is an arbitrary open set of $X$ and $V$ is an arbitrary open neighborhood of $y$,
it implies that $X\times \{y\}\subset J$. Thus we get $J=X\times Y$ as $y$ has a dense orbit in $Y$.

\medskip
(1)$\Longrightarrow (3)$.
Suppose now that $(X,T)$ is disjoint from all minimal systems. By Proposition \ref{hy-main}, $(X,T)$ is an $M$-system.
Let $D$ be any countable dense subset of $X$ consisting of minimal points. Now we show it is what we need.

Assume the contrary that the condition in (3) does not hold for $D$. Then there are a minimal system $(Y,S)$, a point $y\in Y$ and its open neighbourhood $V$,
and there is a nonempty open set $U\subset X$ such that $N_{T\times S}((x,y),U\times V)$ is not syndetic for any point $x\in D\cap U$.

\medskip

\noindent {\em Claim:} For each $x\in D\cap U$, there is some $T\times S$-invariant subset $J_{(x,y)}\subseteq (U\times V)^c$ such that $x\in p_1(J_{(x,y)})$, where $p_1$ is the projection of $X\times Y$ to the first coordinate.

%\medskip

\begin{proof}[Proof of Claim]
Since $N_{T\times S}((x,y),U\times V)$ is not syndetic, $A=N_{T\times S}((x,y),(U\times V)^c)$ is a thick set.
As $x$ is a minimal point, for each $\ep>0$, $N_T(x,B_\ep(x))$ is a syndetic set. Thus for any fixed $\ep>0$ and $L\in \N$, there are infinitely many $n\in N_T(x,B_\ep(x))$ such that
$$n,n+1,\ldots,n+L\in A.$$
From this fact we choose an increasing sequence $\{n_k\}\subseteq \N$ such that $n_k\in N_T(x,B_{1/k}(x))$  and
$n_k,n_k+1,\ldots,n_k+k \in A.$ Without loss of
generality assume that $$\lim_{k\to \infty} (T\times S)^{n_k}(x,y)=(x_0,y_0).$$ Since $n_k\in N_T(x,B_{1/k}(x))$, it is clear that $x_0=x$. Now for each $m$, we have that
$$(T\times S)^m(x_0,y_0)=\lim_{k\to \infty} (T\times S)^{n_k+m}(x,y)\in (U\times V)^c.$$
Now let
$$J_{(x,y)}=\overline{orb((x_0,y_0),T\times S)}.$$ Then $J_{(x,y)}\subseteq (U\times V)^c$, and $x=x_0\in p_1(J_{(x,y)})$.
This ends the proof of the claim.
\end{proof}

Now let $$J=\overline{\bigcup_{x\in D\cap U}J_{(x,y)}}.$$
Then it is clear that $J$ is $T\times S$-invariant and $J\subseteq (U\times V)^c$. By Claim, we have that $D\cap U\subseteq p_1(J)$. Since $D$ is a dense set of $X$, it follows that $$U\subseteq p_1(J).$$
Since $(X,T)$ is transitive and $p_1(J)$ is $T$-invariant and closed, it deduces that $p_1(J)=X$. Moreover, as $(Y,S)$ is minimal, the projection
of $J$ to the second coordinate is $Y$. We conclude that
$J$ is a joining of $X$ and $Y$. But $J\subset (U\times V)^c$, a contradiction.
\end{proof}

%The above theorem has the following form by using the fact that $B$ is syndetic if and only if $B\cap A\not=\emptyset$ for any thick set $A$.

We also have the following theorem which is easier to handle in some situations.

\begin{thm}\label{train}
Let $(X,T)$ be a transitive t.d.s. Then the following statements are equivalent.
\begin{enumerate}
\item $(X,T)$ is disjoint from all minimal systems.

%\item For any transitive point $x$, $N(x,U)\cap A \not=\emptyset$ for any neighborhood $U$ of $x$ and any m-set $A$.

\item $(X,T)$ is weakly mixing and for any nonempty open subset $U\subset X$ and any nonempty open subset $V\subset Y$ of
any minimal system $(Y,S)$, there is $x=x(U,V)\in U$  such that $N_{T\times S}((x,y),U\times V)$ is syndetic for any $y\in V$.

\item $(X,T)$ is weakly mixing and for any nonempty open subset $U\subset X$ and any nonempty open subset $V\subset Y$ of
any minimal system $(Y,S)$, there is a minimal point $x=x(U,V)\in U$  such that $N_{T\times S}((x,y),U\times V)$ is syndetic for any $y\in V$.

\end{enumerate}
\end{thm}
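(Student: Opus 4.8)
The plan is to establish the cycle of implications $(3)\Rightarrow(2)\Rightarrow(1)\Rightarrow(3)$, running almost entirely parallel to the proof of Theorem \ref{train2}, but carefully adjusting the order of quantifiers: here the witness point $x=x(U,V)$ must be uniform in $y\in V$ rather than drawn from a fixed universal countable set, so the roles of ``choose $x$'' and ``locate $y$'' get swapped. The implication $(3)\Rightarrow(2)$ is immediate, since (3) is just (2) with the extra demand that the witnesses be minimal points.

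For $(2)\Rightarrow(1)$ I would fix an arbitrary minimal system $(Y,S)$ and prove $(X,T)\perp(Y,S)$, which suffices. Fix countable bases $\{U_j\}$ of $X$ and $\{V_i\}$ of $Y$, and for each pair let $x_{ij}=x(U_j,V_i)\in U_j$ be the point supplied by (2), so $N_{T\times S}((x_{ij},y),U_j\times V_i)$ is syndetic for \emph{every} $y\in V_i$. Using weak mixing together with Lemma \ref{akin-k}, the residual proximal cells $P[x_{ij}]$ have residual, hence nonempty, intersection, so I may select a single $x_0$ proximal to all the $x_{ij}$. Given a joining $J$, surjectivity of the first projection yields $y$ with $(x_0,y)\in J$; to prove $X\times\{y\}\subseteq J$ I take an arbitrary nonempty open $U\subseteq X$ and open neighbourhood $W$ of $y$, choose a basic $V_i$ with $y\in V_i\subseteq W$ and a basic $U_j$ with $B_\ep(\ol{U_j})\subseteq U$ for suitable $\ep>0$, and intersect the syndetic set $N_{T\times S}((x_{ij},y),U_j\times V_i)$ (available because $y\in V_i$) with the thick set $\{k:d(T^kx_0,T^kx_{ij})<\ep\}$. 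The latter is thick because proximality forces $d(T^{n_i}x_{ij},T^{n_i}x_0)\to 0$ along some $n_i$, and uniform continuity of the iterates then pushes closeness across arbitrarily long windows, exactly as in the proof of Theorem \ref{train2}. A common index $k$ gives $(T^kx_0,S^ky)\in J\cap(U\times W)$, whence $X\times\{y\}\subseteq J$, and density of $orb(y,S)$ together with minimality forces $J=X\times Y$.

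For $(1)\Rightarrow(3)$ I would argue by contradiction. By Lemma \ref{huang-ye-05}, $(X,T)$ is a weakly mixing $M$-system, so its minimal points are dense. If (3) fails there are a minimal $(Y,S)$, nonempty open $U\subseteq X$ and nonempty open $V\subseteq Y$ such that for \emph{every} minimal point $x\in U$ the uniform syndeticity breaks, i.e. there is some $y_x\in V$ with $N_{T\times S}((x,y_x),U\times V)$ not syndetic. This is precisely the hypothesis needed to invoke the Claim in the proof of Theorem \ref{train2}, producing a closed $T\times S$-invariant set $J_{(x,y_x)}\subseteq(U\times V)^c$ with $x\in p_1(J_{(x,y_x)})$. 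Choosing a countable set $\{x_n\}$ of minimal points dense in $U$ and setting $J=\ol{\bigcup_n J_{(x_n,y_{x_n})}}$, I obtain a closed $T\times S$-invariant set inside $(U\times V)^c$ whose first projection contains $\{x_n\}$, hence $U$. Since $p_1(J)$ is closed and forward $T$-invariant with nonempty interior, transitivity of $(X,T)$ gives $p_1(J)=X$, while minimality of $(Y,S)$ gives $p_2(J)=Y$. Thus $J$ is a joining of $(X,T)$ and $(Y,S)$ properly contained in $X\times Y$, contradicting $(X,T)\perp(Y,S)$.

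The main obstacle is the bookkeeping around the two quantifier switches relative to Theorem \ref{train2}. In $(2)\Rightarrow(1)$ the crucial point is that $x_0$ must be fixed \emph{before} its partner $y$ in the joining is known; uniformity of $x_{ij}$ over all $y\in V_i$ is exactly what makes the argument legitimate, since once $y$ surfaces it automatically lands in the prepared basic neighbourhood $V_i$ and the ready-made syndetic set applies. In $(1)\Rightarrow(3)$ the delicate step is recognizing that negating ``$N_{T\times S}((x,y),U\times V)$ is syndetic for all $y\in V$'' produces exactly one offending $y_x$ per minimal point $x$, which is precisely the single-point datum the Claim consumes; the remaining work—that the assembled $J$ genuinely surjects onto both factors—reduces to the standard fact that a closed forward-invariant set with nonempty interior in a transitive system is all of $X$, and to minimality of $(Y,S)$.
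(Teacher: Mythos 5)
Your proposal is correct and takes essentially the same route as the paper's proof: $(2)\Rightarrow(1)$ via Lemma \ref{akin-k} to fix one point $x_0$ proximal to all basic witnesses $x_{ij}$ and then intersecting the syndetic set $N_{T\times S}((x_{ij},y),U_j\times V_i)$ with the thick proximality set, and $(1)\Rightarrow(3)$ by assembling from the non-syndetic data a closed invariant joining inside $(U\times V)^c$ and contradicting disjointness. The only (harmless) deviation is in $(1)\Rightarrow(3)$, where you take a countable dense set of minimal points in $U$ and reuse the exact-return Claim from the proof of Theorem \ref{train2} to get $p_1(J)\supseteq U$ directly, whereas the paper takes minimal points $x_n$ converging to a transitive point $x$ and recovers $x\in p_1(J)$ by an approximate-return argument using syndeticity of $N_T(x_n,B_{\ep_n}(x_n))$; both variants give $p_1(J)=X$ and the same contradiction.
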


\begin{proof} The proof is only slightly different from that of Theorem \ref{train2}.
For the sake of completeness, we give the proof. Obviously, (3)$\Longrightarrow (2)$.

\medskip

(2)$\Longrightarrow (1)$. We follow the proofs in \cite{DSY12, O17}. Assume that $\{U_n\}_{n=1}^\infty$ and $\{V_m\}_{m=1}^\infty$ are bases for the topologies
of $X$ and $Y$ respectively. Then by (2) for given $U_n$ and $V_m$, there is $x_{n,m}\in U_n$ such that $N_{T\times S}((x_{n,m},y),U_n\times V_m)$ is syndetic for any $y\in V_m$.
By the weak mixing property of $(X,T)$, there is a point $x\in X$ such that $(x,x_{n,m})$ is proximal
for any $n,m\in\N$ by Lemma \ref{akin-k}.

Assume now $J\subset X\times Y$ is a joining. Then there is $y\in Y$ such that $(x,y)\in J$.
Let $V$ be an open neighborhood of $y$. Then there is $m_0\in \N$ such that $y\in V_{m_0}\subset V$. We will show that
$(x_{n,m_0},y)\in J$ for any $n\in\N$.

To do this, let $U$ be an open neighborhood of $x_{n,m_0}$. Then there is $\epsilon>0$ such that $B_{2\ep}(x_{n,m_0})\subset U$
and put $W=B_{\ep}(x_{n,m_0})$. Then there is $x_{n',m_0}\subset U_{n'}\subset W$.
Then we have  $N_{T\times S}((x_{n',m_0},y), U_{n'}\times V_{m_0})$ is syndetic. At the same time, since $(x,x_{n',m_0})$ is proximal, we have that
$$\{k\in \Z_+: d(T^k(x),T^k(x_{n',m_0}))<\ep\}$$
is thick. This implies that there is  $k\in\Z_+$ such that
$$S^k(y)\in V_{m_0}\subset V,\ \ T^k(x_{n',m_0}) \in U_{n'},\ \text{and}\ d(T^k(x),T^k(x_{n',m_0}))<\ep.$$
So $T^k(x)\in U$. We have $J\cap {\overline U}\times \overline {V}\not=\emptyset.$
It deduces that $(x_{n,m_0},y)\in J$ for any $n\in\N$.

It implies that $X\times \{y\}\subset J$
and hence we get $J=X\times Y$ as $y$ has a dense orbit.

\medskip
(1)$\Longrightarrow (3)$.
Suppose now that $(X,T)$ is disjoint from $(Y,S)$. By Lemma \ref{hy-main}, $(X,T)$ is an M-system. Assume the contrary that the condition does not hold.
Then there are a nonempty open set $U\subset X$, and a nonempty open set
$V\subset Y$ satisfying that for any minimal point $x\in U$ there
is $y=y(x)\in V$ such that $N_{T\times S}((x,y),U\times V)$ is not syndetic.

Let $x\in U$ be a transitive point. As $(X,T)$ is an $M$-system, we can choose minimal points $x_n\in U$  such that $\lim_{n\rightarrow\infty} x_n=x$. Then for each $n\in\N$
there is $y_n=y_n(x_n)\in V$ such that $N_{T\times S}((x_n,y_n),U\times V)$ is not syndetic.
That is, $N_{T\times S}((x_n,y_n),(U\times V)^c)$ is thick.

We now show that there is $(x_n',y_n')\in (U\times V)^c$ and $(x_n',y_n')\in \overline{orb((x_n,y_n), T\times S)}$
such that its orbit is outside $U\times V$. In fact there is a strictly increasing sequence $\{k_i\}_{i=1}^\infty$ of $\N$ such that
$(T\times S)^{k_i+j}(x_n,y_n)\in (U\times V)^c$ for each $i\in \N$ and $1\le j\le i$. Without loss of
generality assume that $\lim_{i\rightarrow\infty} (T\times S)^{k_i}(x_n,y_n)=(x_n',y_n')$. Then $(x_n',y_n')$
is the point we want. Let
$$J=\overline{\bigcup_{n=1}^\infty orb((x_n',y_n'),T\times S)}.$$
It is clear that $J$ is closed and $T\times S$-invariant. Moreover, as $(Y,S)$ is minimal, the projection
of $J$ to the second coordinate is $Y$. We now show that $p_1(J)=X$, where $p_1$ is the projection of $J$ to the first coordinate.
We note that $p_1(J)$ is $T$-invariant and closed.

To do so, fix $n\in \N$. It is clear that there is a sequence $\{k_i\}$ such that $\lim_{i\rightarrow\infty} T^{k_i}x_n=x_n'$
by the construction of $x_n'$. Choose $0<\ep_n<1/n$.
%such that $B_{2\ep_n}(x_n)\subset U$.
Since $x_n$ is a minimal
point, $N_T(x_n,B_{\ep_n}(x_n))$ is syndetic. Assume that that $l_n$ is the gap of this subset of $\N$. By the continuity of $T$,
there is $\delta_n>0$ such that if $z_1,z_2\in X$ and $d(z_1,z_2)<\delta_n$ then $d(T^i(z_1),T^i(z_2))<\ep_n$ for each $i=1,\ldots,l_n$.
Since $\lim_{i\rightarrow\infty} T^{k_i}x_n=x_n'$ there is $j\in\N$ such that $d(T^{k_j}x_n,x_n')<\delta_n$. This implies that
 $$d(T^{k_j+i}x_n,T^ix_n')<\ep_n,\ i=1,\ldots,\ell_n.$$
There is $1\le i_n\le l_n$ such that $T^{k_j+i_n}x_n\in B_{\ep_n}(x_n)$.
This implies that $d(T^{i_n}(x_n'),x_n)<2\ep_n$ for each $n\in \N$.

Hence $x=\lim_{n\rightarrow \infty} x_n=\lim_{n\rightarrow \infty}  T^{i_n}(x_n')\in p_1(J)$, since $x_n'\in p_1(J)$,
and $p_1(J)$ is $T$-invariant and closed. It deduces that $p_1(J)=X$, as $x$ is a transitive point. We conclude that
$J$ is a joining of $X$ and $Y$. It is clear $J\subset (U\times V)^c$, a contradiction.
\end{proof}

By the same proof we have
\begin{thm}\label{train-y}
Let $(X,T)$ be a weakli mixing M-system and $(Y,S)$ is a minimal t.d.s. Then the following statements are equivalent.
\begin{enumerate}
\item $(X,T)\perp (Y,S)$.

%\item For any transitive point $x$, $N(x,U)\cap A \not=\emptyset$ for any neighborhood $U$ of $x$ and any m-set $A$.

\item For any nonempty open subset $U\subset X$ and any nonempty open subset $V\subset Y$,
 there is $x=x(U,V)\in U$  such that $N_{T\times S}((x,y),U\times V)$ is syndetic for any $y\in V$.

\item For any nonempty open subset $U\subset X$ and any nonempty open subset $V\subset Y$, there is a minimal point $x=x(U,V)\in U$  such that $N_{T\times S}((x,y),U\times V)$ is syndetic for any $y\in V$.

\end{enumerate}
\end{thm}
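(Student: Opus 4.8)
The plan is to mimic the proof of Theorem \ref{train} almost verbatim, the only structural change being that $(X,T)$ is now \emph{assumed} to be a weakly mixing $M$-system (so we no longer invoke Lemma \ref{huang-ye-05} or Proposition \ref{hy-main} to derive these properties) and $(Y,S)$ is a single fixed minimal system rather than an arbitrary one. Thus I must establish the three implications (3)$\Rightarrow$(2)$\Rightarrow$(1)$\Rightarrow$(3); the first is immediate since a minimal point of $U$ is in particular a point of $U$.

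For (2)$\Rightarrow$(1) I would fix countable bases $\{U_n\}$ and $\{V_m\}$ for the topologies of $X$ and $Y$, and for each pair $(n,m)$ use hypothesis (2) to select $x_{n,m}\in U_n$ with $N_{T\times S}((x_{n,m},y),U_n\times V_m)$ syndetic for every $y\in V_m$. Because $(X,T)$ is weakly mixing, Lemma \ref{akin-k} gives that each proximal cell $P[x_{n,m}]$ is residual; intersecting these countably many residual sets produces a point $x$ that is proximal to every $x_{n,m}$. Given any joining $J\subset X\times Y$, pick $y$ with $(x,y)\in J$, and for a neighbourhood $V$ of $y$ choose $m_0$ with $y\in V_{m_0}\subset V$. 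The crux is to show $(x_{n,m_0},y)\in J$ for all $n$: I intersect the thick set $\{k: d(T^kx,T^kx_{n',m_0})<\ep\}$ (coming from proximality) with the syndetic set of transfer times of $(x_{n',m_0},y)$ into a small box $U_{n'}\times V_{m_0}$, where $U_{n'}$ is chosen inside a small ball about $x_{n,m_0}$, obtaining a single $k$ realizing both, whence a point of $J$ lies in $\overline{U}\times\overline{V}$. Shrinking the neighbourhood $U$ of $x_{n,m_0}$ yields $(x_{n,m_0},y)\in J$ for every $n$; since $\{x_{n,m_0}\}_n$ is dense in $X$ and $J$ is closed, $X\times\{y\}\subset J$, and minimality of $(Y,S)$ (dense orbit of $y$) forces $J=X\times Y$.

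For (1)$\Rightarrow$(3) I argue by contradiction: if (3) fails there are nonempty open $U\subset X$, $V\subset Y$ so that for every minimal point $x\in U$ some $y=y(x)\in V$ makes $N_{T\times S}((x,y),(U\times V)^c)$ thick. Using that $(X,T)$ is an $M$-system, fix a transitive point $x\in U$ and minimal points $x_n\in U$ with $x_n\to x$; for each $n$ extract from the thick set a limit point $(x_n',y_n')\in(U\times V)^c$ whose entire $T\times S$-orbit stays in $(U\times V)^c$. Setting $J=\overline{\bigcup_n orb((x_n',y_n'),T\times S)}$ gives a closed invariant subset of $(U\times V)^c$ that projects onto $Y$ by minimality. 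I would then show $p_1(J)=X$, which yields that $J$ is a joining inside $(U\times V)^c$, contradicting $(X,T)\perp(Y,S)$.

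The main obstacle is exactly this last step, $p_1(J)=X$. The argument has to recover $x=\lim_n x_n$ inside the closed invariant set $p_1(J)$ even though $J$ was built from the auxiliary limit points $x_n'$. This is done by exploiting that each $x_n$ is a minimal point, so $N_T(x_n,B_{\ep_n}(x_n))$ is syndetic with some gap $l_n$; uniform continuity over the bounded window $1,\dots,l_n$ together with $T^{k_j}x_n$ close to $x_n'$ produces an index $1\le i_n\le l_n$ with $T^{i_n}(x_n')$ within $2\ep_n$ of $x_n$. Choosing $\ep_n\to 0$ and using that $p_1(J)$ is closed and $T$-invariant gives $x=\lim_n T^{i_n}(x_n')\in p_1(J)$; since $x$ is transitive, $p_1(J)=X$. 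Keeping this bounded-window estimate uniform is the only delicate point, and it is precisely the place where the minimal-point hypothesis on $x_n$ (rather than mere density) is used.
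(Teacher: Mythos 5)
Your proposal is correct and follows essentially the same route as the paper: the paper proves Theorem \ref{train-y} simply by noting it is ``the same proof'' as Theorem \ref{train}, and your adaptation --- taking the weakly mixing $M$-system property as a hypothesis instead of deriving it, fixing a single minimal $(Y,S)$, and reproducing the basis/proximal-cell argument for (2)$\Rightarrow$(1) and the contradiction construction with the bounded-gap recovery of $x\in p_1(J)$ for (1)$\Rightarrow$(3) --- is exactly that adaptation. You also correctly identify the one delicate point (the syndetic-return, uniform-continuity window estimate that puts $T^{i_n}(x_n')$ within $2\ep_n$ of $x_n$), which is the same crux as in the paper's proof of Theorem \ref{train}.
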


\subsection{Central sets and another form of the main result}\
\medskip

First we recall the following form of Auslander-Ellis theorem (see \cite[Theorem 8.7.]{Fur81} for example).

\begin{thm}[Auslander-Ellis]\label{thmAuslander}
Let $(X,T)$ be a compact metric t.d.s.. Then for any $x\in X$ and any $T$-invariant closed subset $Z$ of
$\overline {orb(x, T)}$, there is some minimal point $x' \in Z$ such that
$(x,x')$ is proximal.
\end{thm}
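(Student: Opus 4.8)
The plan is to prove this through Ellis's theory of the enveloping semigroup, which is the standard route to the Auslander--Ellis theorem and which, crucially, also lets us place the proximal minimal point inside the prescribed set $Z$.

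First I would form the enveloping (Ellis) semigroup $E=E(X,T)=\overline{\{T^n:n\in\Z_+\}}\subseteq X^X$, the closure taken in the product topology. Then $E$ is a nonempty compact space on which composition is an associative operation that is right-topological, i.e.\ for each fixed $q\in E$ the map $p\mapsto p\circ q$ is continuous, which is immediate since in each coordinate it is an evaluation of $p$. The two facts I would quote from Ellis's theory are: (i) every compact right-topological semigroup possesses a minimal left ideal, and every minimal left ideal contains an idempotent (an idempotent $u$ satisfying $u\circ u=u$); and (ii) for the evaluation action one has $\overline{orb(x,T)}=Ex:=\{px:p\in E\}$, because $p\mapsto px$ is continuous, $E$ is compact, and $\{T^nx\}\subseteq Ex\subseteq\overline{orb(x,T)}$.

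Next I would reduce the theorem to producing an idempotent $u$ in a minimal left ideal with $ux\in Z$. Since $Z$ is closed and $T$-invariant it is $E$-invariant (if $p=\lim T^{n_\alpha}$ pointwise and $z\in Z$, then $pz=\lim T^{n_\alpha}z\in Z$), so in particular $Ez\subseteq Z$ for every $z\in Z$. Picking any $z\in Z$ and writing $z=p_0x$ with $p_0\in E$ (possible as $z\in Z\subseteq\overline{orb(x,T)}=Ex$), the set $Ep_0$ is a left ideal of $E$, hence by (i) it contains a minimal left ideal $L$, and $L$ contains an idempotent $u$. Then
\[
ux\in Lx\subseteq Ep_0x=E(p_0x)=Ez\subseteq Z,
\]
so $x':=ux$ lies in $Z$ as required.

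Finally I would verify that $x'=ux$ is a minimal point proximal to $x$. Minimality follows because $u(ux)=u^2x=ux$, so $x'$ is a point fixed by an idempotent of a minimal left ideal, and such points are exactly the almost periodic (minimal) points. For proximality, observe that $u$ sends both $x$ and $x'$ to the same point $w:=ux$, since $ux=w$ and $u(ux)=ux=w$; choosing a net with $T^{n_\alpha}\to u$ pointwise gives $T^{n_\alpha}x\to w$ and $T^{n_\alpha}x'\to w$, and as $X$ is metric I can extract a sequence $\{m_k\}$ with $d(T^{m_k}x,w)<1/k$ and $d(T^{m_k}x',w)<1/k$, whence $d(T^{m_k}x,T^{m_k}x')\to 0$, which is precisely the definition of a proximal pair. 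The step I expect to be the main obstacle is exactly this interface between algebra and dynamics: $E$ is only right-topological, so existence of idempotents in minimal left ideals must be invoked as Ellis's theorem rather than obtained from a naive fixed-point argument, and the return from the semigroup identity $ux=u(ux)$ to the sequential metric definition of proximality relies on the metrizability of $X$ to replace the defining net of $u$ by an honest sequence of return times.
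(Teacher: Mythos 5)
The paper offers no proof of this statement: it is recalled from Furstenberg's book and cited as \cite[Theorem 8.7]{Fur81}, so your argument can only be compared with the standard proof behind that citation. Your enveloping-semigroup proof is correct and is essentially that standard argument: the two pillars you quote (Ellis's idempotent lemma for compact right-topological semigroups and the identity $\overline{orb(x,T)}=Ex$) are exactly what the cited proof uses, and your passage from the algebraic identity $ux=u(ux)$ to the paper's sequential definition of proximality via metrizability of $X$ is sound. The only internal variant worth noting is how minimality of $x'=ux$ is secured: Furstenberg first replaces $Z$ by a minimal subset $Z'\subseteq Z$ (Zorn) and takes an idempotent in the closed subsemigroup $\{p\in E: px\in Z'\}$, so that $x'\in Z'$ is minimal for free, whereas you take the idempotent in a minimal left ideal $L\subseteq Ep_0$ and invoke the (true, standard) characterization of almost periodic points as those fixed by a minimal idempotent --- a slightly heavier quoted fact, bought at the price of not having to pass to a minimal subset first; either way the quoted ingredients are standard and the proof is complete.
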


Using Auslander-Ellis's Theorem Furstenberg introduced a
notion called central set.  A subset $S\subseteq \Z_+$ is a {\em
central set} if there exists a t.d.s. $(X,T)$, a point $x\in X$ and
a minimal point $y$ proximal to $x$, and a neighborhood $U_y$ of $y$
such that $N_T(x,U_y)\subset S$. It is known that any central set is
an IP-set \cite[Proposition 8.10.]{Fur81}.
Denote the set of all central sets by $\F_C$. A set from $\F_C^*$ is called a $C^*$-set.
Note that all $IP^*$-sets are $C^*$-sets, but there is some $C^*$-set which is not $IP^*$.
Moreover, we know that a point of a dynamical system is $C^*$-recurrent if and only if it is $IP^*$-recurrent
if and only if it is distal \cite[Proposition 9.17]{Fur81}.

\medskip
Recall that a subset $A$ of $\Z_{+}$ is called a {\it dynamical syndetic set}, if there exist a minimal system $(Y,S)$, $y\in Y$ and an open neighbourhood $V_y$ of $y$ such that $A\supset N_S(y,V_y)$. Denote the set of all dynamical syndetic sets by $\F_{ds}$.
Let
\begin{equation*}
  \F_{dps}=\F_t\cap \F_{ds}=\{A\cap B: A\in F_t, B\in \F_{ds}\}.
\end{equation*}
Each element of $\F_{dps}$ is called a {\em dynamical piecewise syndetic set}.

\begin{thm}\label{C-thm}
$\F_{dps}=\F_{C}$.
\end{thm}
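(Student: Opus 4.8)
The plan is to prove the two inclusions $\F_{dps}\subseteq\F_C$ and $\F_C\subseteq\F_{dps}$ by explicit symbolic constructions in product systems, after a preliminary reduction. First I would record that $\F_{dps}$ is hereditary upward: if $C\supseteq A\cap B$ with $A\in\F_t,\ B\in\F_{ds}$, then $C=(A\cup C)\cap(B\cup C)$, and $A\cup C$ is thick while $B\cup C$ is dynamical syndetic, so $C=A'\cap B'\in\F_{dps}$. Thus in each direction it suffices to compare ``generators''. Throughout write $\Omega=\{0,1\}^{\Z_+}$ with the shift $\sigma$, let $[1]_0=\{\omega\in\Omega:\omega_0=1\}$ (clopen), let $\mathbf 1$ be the constant sequence, and for $A\subseteq\Z_+$ let $1_A\in\Omega$ be its indicator.

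For $\F_{dps}\subseteq\F_C$, take $C=A\cap B$ with $A$ thick and $B\supseteq N_S(y_0,V_0)$ dynamical syndetic, witnessed by a minimal $(Y,S)$, $y_0\in Y$ and open $V_0\ni y_0$. I would work in $(Y\times\Omega,\,S\times\sigma)$ with the point $\xi=(y_0,1_A)$. Since $A$ is thick, along the starts $\{n_k\}$ of longer and longer runs of $A$ one has $\sigma^{n_k}1_A\to\mathbf 1$, so some $(w,\mathbf 1)\in\overline{orb(\xi)}$; as $\overline{orb((w,\mathbf 1))}=Y\times\{\mathbf 1\}\cong(Y,S)$ is minimal, the point $q=(y_0,\mathbf 1)$ lies in $\overline{orb(\xi)}$ and is minimal. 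Along the same $\{n_k\}$ the first coordinates of $\xi$ and $q$ coincide while the second coordinates satisfy $d(\sigma^{n_k}1_A,\mathbf 1)\to0$, so $q$ is proximal to $\xi$. Finally, with the open neighbourhood $U=V_0\times[1]_0$ of $q$,
\[
N_{S\times\sigma}(\xi,U)=\{n:S^ny_0\in V_0,\ (1_A)_n=1\}=N_S(y_0,V_0)\cap A\subseteq B\cap A=C,
\]
so $C$ contains the return-time set of $\xi$ to a neighbourhood of a minimal point proximal to $\xi$; hence $C$ is central.

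For $\F_C\subseteq\F_{dps}$ it suffices, by heredity, to show that the generator $P=N_T(x,U_y)$ lies in $\F_{dps}$, where $y$ is a minimal point proximal to $x$ with $y\in U_y$. Here I would \emph{code} $P$ symbolically: set $e=1_P\in\Omega$ and work in $(X\times\Omega,\,T\times\sigma)$ with $\theta=(x,e)$, noting $(\sigma^ne)_0=1\iff n\in P$. Choosing a minimal idempotent $u$ in the enveloping semigroup $E(X)$ with $ux=y$ (available since $x$ is proximal to the minimal point $y$), I would lift it to a minimal idempotent $\tilde u\in E(X\times\Omega)$ restricting to $u$ on $X$, and set $\mu=\tilde u\theta$. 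Then $\mu$ is minimal, $\theta$ is proximal to $\mu$, and writing $\tilde u=\lim_\alpha(T\times\sigma)^{n_\alpha}$ one has $T^{n_\alpha}x\to ux=y\in U_y$, so eventually $n_\alpha\in P$ and therefore $(\mu_\Omega)_0=\lim_\alpha(\sigma^{n_\alpha}e)_0=1$. Now $B:=N_{T\times\sigma}(\mu,\,X\times[1]_0)=\{n:(\mu_\Omega)_n=1\}$ is dynamical syndetic (it is the return-time set of the minimal point $\mu$ to a neighbourhood of itself), while the symbolic proximality of $e$ and $\mu_\Omega$ produces long agreement windows, so $A:=\{n:e_n=(\mu_\Omega)_n\}$ is thick. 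Since $A\cap B=\{n:(\mu_\Omega)_n=1,\ e_n=1\}\subseteq\{n:e_n=1\}=P$, we get $P\supseteq A\cap B\in\F_{dps}$, as required.

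The main obstacle is exactly the second inclusion, and more precisely the production of the thick factor. The naive attempt — taking $B=N_T(y,V)$ for a small $V\ni y$ and trying a thick $A$ with $A\cap B\subseteq N_T(x,U_y)$ — fails, because it would force the proximal ``shadowing times'' $\{n:d(T^nx,T^ny)<\delta\}$ to be thick, which is false for a generic proximal pair (these times form only a central, not a thick, set). The key idea that removes the obstruction is to pass to the symbolic coordinate: proximality of $\theta$ and $\mu$ gives agreement of the two $\{0,1\}$-sequences on arbitrarily long intervals (a purely one-sided phenomenon in $\Omega$), which is genuinely thick even though the $X$-shadowing is not; and a minimal idempotent is used precisely to guarantee that the minimal point $\mu$ carries the value $1$ in its zeroth symbol, so that the dynamically syndetic set $B$ is cut down by $A$ into a subset of $P$. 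Verifying the liftability of the idempotent and the boundary-free coding are the routine technical points to be checked.
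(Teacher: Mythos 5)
Your proof is correct in both directions, and your first inclusion ($\F_{dps}\subseteq\F_{C}$) is essentially the paper's own construction: pair $1_A$ with the minimal system in $(Y\times\Omega,S\times\sigma)$, use the runs of the thick set $A$ to reach the fixed point $\mathbf{1}$, obtain the minimal point $(y_0,\mathbf{1})$ proximal to $(y_0,1_A)$, and read off $N_{S\times\sigma}(\xi,V_0\times[1]_0)=N_S(y_0,V_0)\cap A\subseteq C$. (Working in the full product even sidesteps the paper's footnote about making the orbit-closure system surjective.)

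For the converse inclusion your route genuinely differs --- minimal idempotent $u\in E(X)$ with $ux=y$, lifting to $\tilde u\in E(X\times\Omega)$ through the restriction homomorphism (routine via Ellis--Namakura applied to $\tilde L\cap \pi^{-1}(u)$), then symbolic coding of $P=N_T(x,U_y)$ --- and every step checks out. But the detour is forced by a false belief. You assert that for a proximal pair $(x,y)$ the set $\{n: d(T^nx,T^ny)<\delta\}$ is ``only central, not thick.'' In fact it is \emph{always} thick: given $L$, choose by uniform continuity of $T,T^2,\dots,T^L$ a $\delta'>0$ with $d(u,v)<\delta'$ implying $d(T^ju,T^jv)<\delta$ for $0\le j\le L$; proximality supplies infinitely many $n$ with $d(T^nx,T^ny)<\delta'$, and then the whole block $\{n,n+1,\dots,n+L\}$ lies in the set. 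This is precisely \cite[Lemma 8.1]{Fur81}, which the paper invokes, and it makes the ``naive'' argument you dismiss work in two lines: with $U_y=B_{2\ep}(y)$, take $A=\{n: d(T^nx,T^ny)<\ep\}$ (thick) and $B=N_T(y,B_\ep(y))$ (dynamical syndetic, as $y$ is minimal), and then $A\cap B\subseteq N_T(x,B_{2\ep}(y))\subseteq S$ by the triangle inequality. Note that your own thickness claim for the agreement set $\{n: e_n=(\mu_\Omega)_n\}$ is exactly the shift-space instance of this general fact --- there is nothing ``purely one-sided about $\Omega$'' in it --- so the very observation that makes your symbolic argument run already makes the direct argument run, with no enveloping-semigroup machinery needed. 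What your approach buys, at the cost of the idempotent-lifting technicality, is a structural reformulation (the thick and syndetic factors appear as proximality and return times of one minimal point in a coded system); what the paper's approach buys is a one-paragraph proof from the metric definition of proximality.
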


\begin{proof}
First we show that $\F_{C}\subseteq \F_{dps}$. Let $Q \in \F_C$. Then by definition, there
there exists a system $(X,T)$, a point $x\in X$ and
a minimal point $y$ proximal to $x$, and a neighborhood $U_y$ of $y$
such that $N_T(x,U_y)\subset Q$. Without loss of generality, we may assume that $U_y=B_{2\ep}(y)=\{z: d(z,y)<2 \ep\}$ for some $\ep>0$.
Let $$A=\{n\in\Z_+: d(T^nx,T^ny)<\ep\}.$$
Then $A$ is a thick set since $(x,y)$ is proximal \cite[Lemma 8.1.]{Fur81}. It is easy to verify that
$$A\cap N_T(y,B_\ep(y))\subseteq N_T(x,B_{2\ep}(y))\subseteq Q.$$
Hence $\F_{C}\subseteq \F_{dps}$.

\medskip

Now we show the converse. Let $Q\in \F_{dps}$. Then there is a thick set $A$ and a dynamical syndetic
set $B$ such that $Q = A\cap B$. Let $(Y,S)$ be a minimal system, $y\in Y$ and an open neighbourhood $V_y$ of $y$ such that $B\supset N_S(y,V_y)$.

Let $(\Sigma_2=\{0,1\}^\Z_+,\sigma)$ be the shift system. Let $\widetilde{X}=\Sigma_2\times Y$ and $\widetilde{T}=\sigma \times S$.
Let $x_0=(1_A, y)\in \widetilde{X}$, $X=\overline{orb(x_0,\widetilde{T})} $, and  $T=\widetilde{T}|_X$. Then $(X,T)$ is a t.d.s. \footnote{If $T$ is not surjective, one may embed $(X,T)$ into some surjective system. Let $Z=X\times D$, where $D=\{\frac 1n\}_{n\in \N}\cup \{0\}$. Define $R: Z\rightarrow Z$ satisfying $R(x,\frac{1}{n+1})=(x,\frac{1}{n})$, $n\in \N$; $R(x,1)=(Tx,1)$ and $R(x,0)=(x,0)$ for all $x\in X$. Then $(Z,R)$ is a t.d.s. and $R$ is surjective. Identifying $x$ with $(x,1)$ for all $x\in X$, $X$ can be viewed as a closed subset of $Z$ and $T=R|_X$. We cite this approach from the proof of \cite[Lemma 3.13]{FH12}.}

Now we show that $y_0=(1, y)\in X$ is a minimal point which is proximal to $x_0$.
First we show $y_0\in X$. Since $A$ is thick, there is an increasing sequence $\{n_i\}_{i=1}^\infty$ such that
$\{n_i,n_i+1,\ldots,n_i+i\}\subseteq A$ for all $i\in \N$. Without loss of generality, we assume that
$$x'=\lim_{i\to\infty} T^{n_i} x_0\in X.$$
By the construction of $\{n_i\}_{i=1}^\infty$, $x'$ has the form $(1,y')$ for some $y'\in Y$. As $(Y,S)$ is minimal,
there is some sequence $\{m_i\}_{i=1}^\infty$ such that $y=\lim_{i\to\infty} S^{m_i}y'$. Thus
$$y_0=(1,y)=\lim_{i\to\infty} T^{m_i}x' \in X.$$
Since $1$ is a fixed point of $\sigma$ and $y$ is minimal point of $(Y,S)$, $y_0$ is a minimal point of $(X,T)$.

Note that $\{n_i,n_i+1,\ldots,n_i+i\}\subseteq A$ for all $i\in \N$, and it follows that
$$\lim_{i\to \infty} d_X(T^{n_i}x_0, T^{n_i}y_0)=0,$$ where $d_X$ is the metric of $X$.
That is, $x_0$ and $y_0$ are proximal.

Let $[1]=\{\xi\in\Sigma_2 : \xi_0=1\}$. Then $V= ([1]\times V_y)\cap X$ is an open neighbourhood of $y_0$ in $X$.
Thus
$$Q=A\cap B\supseteq  A\cap N_S(y,V_y)=N_T\left( x_0, V \right)$$ is a central set, i.e. $Q\in \F_{C}$. Hence $\F_{dps}\subseteq \F_C$. The proof is completed.
\end{proof}

To show the another form of the main result we need the following lemma.

\begin{lem}\label{CC}
Let $(X,T)$ be a transitive t.d.s. If $(X,T)$ is disjoint from all minimal systems, then $(X,T)$ is weakly mixing and
there is a countable dense subset $D$ of $X$ such that for each nonempty open subset $U$ of $X$ and each central
set $S=A\cap B$ (where $A$ is thick and $B$ is dynamical syndetic),
we can find $x=x(B)\in D\cap U$ independent of $A$ such that $N_T(x,U)\cap S\neq \emptyset$.

%each central set $S$ there is $x\in U$ such that $N_T(x,U)\cap S\neq \emptyset$.
\end{lem}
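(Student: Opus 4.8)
The plan is to obtain this directly from Theorem \ref{train2} together with the duality between thick and syndetic sets; no new machinery is needed. First, since $(X,T)$ is transitive and disjoint from all minimal systems, Lemma \ref{huang-ye-05} already yields that $(X,T)$ is weakly mixing (in fact a weakly mixing $M$-system), so only the second assertion requires proof. I would take $D$ to be exactly the countable dense subset of $X$ provided by clause (2) of Theorem \ref{train2} (applicable because (1) holds), and claim that this single $D$ — which is universal across all minimal systems — is the set we want.

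Now fix a nonempty open set $U\subseteq X$ and a central set $S=A\cap B$ with $A$ thick and $B$ dynamical syndetic. Since $B$ is dynamical syndetic, I fix once and for all a witness for it: a minimal system $(Y,S_0)$, a point $y\in Y$, and an open neighbourhood $V_y$ of $y$ with $B\supseteq N_{S_0}(y,V_y)$. Applying Theorem \ref{train2} to this minimal system $(Y,S_0)$, the point $y$, the neighbourhood $V_y$, and the open set $U$, I obtain a point $x\in D\cap U$ such that $N_{T\times S_0}((x,y),U\times V_y)$ is syndetic. The crucial observation is that $x$ is selected using only the data $(Y,S_0,y,V_y)$ and $U$, all of which are determined by (a fixed witness of) $B$ and by $U$ alone; the thick set $A$ never enters. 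Hence $x=x(B)$ is independent of $A$, exactly as the statement demands.

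It then remains to verify the nonempty intersection. By definition,
$$N_{T\times S_0}((x,y),U\times V_y)=N_T(x,U)\cap N_{S_0}(y,V_y)\subseteq N_T(x,U)\cap B,$$
so $N_T(x,U)\cap B$ contains a syndetic set and is therefore itself syndetic (families are hereditary upwards). Since $\F_s=\F_t^*$, every syndetic set meets every thick set; applying this to $A$ gives $(N_T(x,U)\cap B)\cap A\neq\emptyset$, that is, $N_T(x,U)\cap S\neq\emptyset$. This completes the argument.

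I do not expect a genuine obstacle here, as the content is essentially a repackaging of Theorem \ref{train2}: the decomposition $S=A\cap B$ in the hypothesis is precisely the characterization $\F_C=\F_t\cap\F_{ds}$ of Theorem \ref{C-thm}, and the thick set $A$ is absorbed purely through the duality $\F_s=\F_t^*$. The only point requiring care is the bookkeeping of dependencies — ensuring the chosen $x$ depends solely on $B$ (through a fixed witness) and $U$, and not on $A$ — which is exactly what licenses the notation $x=x(B)$ in the statement.
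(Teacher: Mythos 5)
Your proof is correct and follows essentially the same route as the paper: both obtain $D$ from Theorem \ref{train2}(2) after invoking Lemma \ref{huang-ye-05}, fix a witness $(Y,S_0,y,V_y)$ for the dynamical syndetic set $B$, and conclude via the containment $N_T(x,U)\cap N_{S_0}(y,V_y)\subseteq N_T(x,U)\cap B$ together with the duality $\F_t^*=\F_s$. Your explicit bookkeeping that $x$ depends only on the fixed witness of $B$ and on $U$ is exactly what the paper leaves implicit in writing $x=x(B)$.
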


\begin{proof}
Assume that $(X,T)$ is disjoint from all minimal systems. By Lemma \ref{huang-ye-05},
$(X,T)$ is a weakly mixing M-system.  Then by Theorem \ref{train2}
 there is a countable dense subset $D$ of $X$ satisfying the condition in Theorem \ref{train2} (2).

Let $U$ be any nonempty open set of $X$. By Theorem \ref{C-thm} let $S=A\cap B$ be a central set,
where $A$ is thick and $B$ is a dynamical syndetic set. Let $(Y,S)$ be a minimal system, $y\in Y$ and
a nonempty open neighbourhood $V$ of $Y$ such that $B\supseteq N_S(y,V)$. For $y$ and $V$,
by the choice of $D$ there is some $x=x(B)\in D\cap U$ such that $N_{T\times S}((x,y),U\times V)$ is syndetic. Hence
\begin{equation*}
  \begin{split}
N_T(x,U)\cap S & =N_T(x,U)\cap A\cap B \supseteq N_T(x,U) \cap (A\cap N_S(y,V) )\\ & = A\cap N_{T\times S}((x,y),U\times V)\neq \emptyset.
\end{split}
\end{equation*}
The proof is completed.
\end{proof}

Now we are ready to give another form of the main result.
\begin{thm}\label{thm-central}
Let $(X,T)$ be a transitive t.d.s. Then $(X,T)\perp \mathcal{M}$
if and only if $(X,T)$ is weakly mixing and there is a countable dense subset $D$ of $X$ such
that for each nonempty open subset $U$ of $X$ and each central set $S=A\cap B$, we can find $x=x(B)\in D\cap U$  with $N_T(x,U)\cap S\neq \emptyset$, where $A$ is
thick and $B$ is dynamical syndetic.
\end{thm}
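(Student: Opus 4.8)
The plan is to treat the two implications separately, noting that one of them is essentially already in hand. The \emph{necessity} (``$(X,T)\perp\mathcal{M}$ $\Rightarrow$ the central-set condition'') is nothing but Lemma \ref{CC}: there we already produced, for a transitive system disjoint from all minimal systems, a countable dense $D$ and, for every nonempty open $U$ and every central set $S=A\cap B$, a point $x=x(B)\in D\cap U$ depending only on the dynamical syndetic factor $B$ with $N_T(x,U)\cap S\neq\emptyset$. So I would simply invoke Lemma \ref{CC}, and the real work goes into the converse.

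For \emph{sufficiency}, I would run the joining argument already used in Theorem \ref{train2}(2)$\Rightarrow$(1), but feeding it the central-set hypothesis in place of the syndetic-transfer hypothesis. First, using weak mixing and Lemma \ref{akin-k}: since $D$ is countable, $\bigcap_{x\in D}P[x]$ is a countable intersection of residual sets, hence residual and nonempty, and I fix $x_0$ in it, so that $(x_0,x)$ is proximal for every $x\in D$. Now let $(Y,S)$ be any minimal system and $J\subseteq X\times Y$ a joining; since $J$ projects onto $X$, choose $y\in Y$ with $(x_0,y)\in J$. The goal is to show $X\times\{y\}\subseteq J$, for then invariance and closedness of $J$, together with density of the orbit of $y$, force $J=X\times Y$, i.e. disjointness.

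The heart of the argument is the correct order of quantifiers, which is exactly what the clause ``$x=x(B)$ independent of $A$'' is for. Fix a nonempty open $U\subseteq X$ and an open neighbourhood $V$ of $y$; shrink $U$ to a nonempty open $U'$ and pick $\ep>0$ with $B_\ep(\overline{U'})\subseteq U$. Set $B=N_S(y,V)$, which is a dynamical syndetic set by definition. Because the hypothesis selects $x=x(B)\in D\cap U'$ \emph{before} $A$ enters, I first obtain this $x$; only then do I form $A=\{k\in\Z_+: d(T^kx,T^kx_0)<\ep\}$, which is thick because $(x,x_0)$ is proximal. Now $S=A\cap B$ is a central set (by Theorem \ref{C-thm}, $\F_C=\F_{dps}=\F_t\cap\F_{ds}$), and applying the hypothesis to this very $S$ returns the \emph{same} $x=x(B)$ together with some
$$k\in N_T(x,U')\cap A\cap B.$$
Unravelling: $T^kx\in U'$ and $d(T^kx_0,T^kx)<\ep$ give $T^kx_0\in B_\ep(\overline{U'})\subseteq U$, while $k\in B$ gives $S^ky\in V$; since $(x_0,y)\in J$ and $J$ is invariant, $(T^kx_0,S^ky)\in J\cap(U\times V)$.

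Since $U$ is an arbitrary nonempty open set and $V$ an arbitrary neighbourhood of $y$, this yields $X\times\{y\}\subseteq\overline{J}=J$, completing the argument as above. The step I expect to be the only genuine obstacle is precisely the apparent circularity between $x$ and $A$: the thick set $A$ is built from $x$, yet $x$ must be produced by the hypothesis applied to a central set that already contains $A$. This is resolved exactly because the hypothesis guarantees $x$ depends only on $B=N_S(y,V)$ and on $U'$, never on $A$; everything else (proximality making $A$ thick, $N_S(y,V)$ being dynamical syndetic, and the identification of central sets in Theorem \ref{C-thm}) is routine bookkeeping.
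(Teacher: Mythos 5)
Your proposal is correct and follows essentially the paper's own proof: necessity is, as you say, exactly Lemma \ref{CC}, and your sufficiency argument rests on the same two pillars the paper uses --- the identification $\F_C=\F_t\cap\F_{ds}$ from Theorem \ref{C-thm} and the quantifier order encoded in ``$x=x(B)$ independent of $A$''. The only difference is organizational: the paper applies the hypothesis to $A\cap N_S(y,V)$ for \emph{every} thick $A$ to conclude that $N_{T\times S}((x,y),U\times V)$ is syndetic and then cites Theorem \ref{train2} as a black box, whereas you inline that theorem's joining argument and invoke the hypothesis once per pair $(U,V)$ with the single thick set supplied by proximality to $x_0$ --- a cosmetic reorganization of the same proof.
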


\begin{proof}
Assume that $(X,T)$ is weakly mixing and  there is a countable dense subset $D$ of $X$ such that for each
nonempty open subset $U$ of $X$ and each central set $S=A\cap B$ (where $A$ is thick and $B$ is dynamical syndetic),
we can find $x=x(B)\in D\cap U$ independent of $A$ such that $N_T(x,U)\cap S\neq \emptyset$. We show that $(X,T)$ is disjoint from all minimal systems.

Let $(Y,S)$ be a minimal system. We will show that for any nonempty open subset $U\subset X$ and any nonempty open subset
$V\subset Y$, $y\in V$, there is $x\in D\cap U$  such that $N_{T\times S}((x,y),U\times V)$ is syndetic. And hence
by Theorem \ref{train2}, $(X,T)$ is disjoint from all minimal systems.

Let $A\in \F_t$. Then by Theorem \ref{C-thm}, $A\cap N_S(y,V)\in \F_{dps}=\F_{C}$. By assumption, there is some $x\in D\cap U$ independent of $A$ such that
$$N_T(x,U) \cap (A\cap N_S(y,V) )\neq \emptyset.$$
That is,
$$A\cap N_{T\times S}((x,y),U\times V)=N_T(x,U) \cap (A\cap N_S(y,V) )\neq \emptyset.$$
As $A$ is an arbitrary thick set, $N_{T\times S}((x,y),U\times V)$ is syndetic.

\medskip

The converse follows from the proof of Lemma \ref{CC}. The proof is completed.
\end{proof}

%\begin{proof}
%Assume that $(X,T)$ is weakly mixing and for each nonempty open subset $U$ of $X$ there is $x\in U$ such that $N_T(x,U)$ is an  $C^*$-set. We show that $(X,T)$ is disjoint from all minimal systems. Let $(Y,S)$ be a minimal system. We will show that for any nonempty open subset $V\subset Y$ of $Y$, $N_{T\times S}((x,y),U\times V)$ is syndetic for any $y\in V$. And hence by Theorem \ref{train}, $(X,T)$ is disjoint from all minimal systems.

%Let $A\in \F_t$. Then $$A\cap N_{T\times S}((x,y),U\times V)=A\cap N_T(x,U)\cap N_S(y,V)=N_T(x,U) \cap (A\cap N_S(y,V) )$$ By Theorem \ref{C-thm}, $A\cap N_S(y,V)\in \F_{dps}=\F_{C}$. Since $N_T(x,U)$ is a $C^*$-set, we have that $$A\cap N_{T\times S}((x,y),U\times V)=N_T(x,U) \cap (A\cap N_S(y,V) )\neq \emptyset.$$ As $A$ is an arbitrary thick set, $N_{T\times S}((x,y),U\times V)$ is syndetic.
%\end{proof}

\subsection{The general case}\
\medskip

Now we discuss Furstenberg's problem without the transitivity assumption. It was proved in \cite[Theorem 4.3]{HY05}
that if $(X,T)\perp \mathcal{M}$, then the set of minimal points of $(X,T)$ is dense in $X$.
Moreover, the following proposition was proved in \cite[Corollary 4.6]{HY05}.

\begin{prop} \label{hunagye05} Let $(X,T)$ be a t.d.s. Then $(X,T)\perp \mathcal{M}$ if and only if for
any minimal system $(Y,S)$, there exist countably many transitive subsystems of $(X,T)$ such that their union is dense in
$X$ and each of them is disjoint from $Y$.
\end{prop}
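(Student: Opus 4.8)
The plan is to reduce the proposition to a statement about a single minimal system and then quantify. Since $(X,T)\perp\mathcal{M}$ means exactly that $(X,T)\perp(Y,S)$ for every minimal $(Y,S)$, and the displayed right-hand condition is likewise a conjunction over all minimal $(Y,S)$, it suffices to prove, for an arbitrary fixed minimal $(Y,S)$, the equivalence
\[
(X,T)\perp (Y,S)\iff \text{there are countably many transitive subsystems }Z_{n}\subseteq X\text{ with }\overline{\bigcup_{n}Z_{n}}=X\text{ and each }(Z_{n},T)\perp (Y,S).
\]
Throughout I would use that, because $(Y,S)$ is minimal, every nonempty subsystem of $X\times Y$ projects onto $Y$; hence a joining of $X$ and $Y$ is precisely a nonempty closed invariant $W\subseteq X\times Y$ with $p_{1}(W)=X$.

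For the implication $\Leftarrow$ I would argue directly. Let $W\subseteq X\times Y$ be a joining and fix $n$. The set $W_{n}=W\cap(Z_{n}\times Y)$ is closed and invariant; it is nonempty and projects onto $Z_{n}$, since for each $x\in Z_{n}\subseteq X$ the surjectivity of $p_{1}|_{W}$ supplies some $(x,y)\in W\cap(Z_{n}\times Y)=W_{n}$, and it projects onto $Y$ by minimality of $(Y,S)$. Thus $W_{n}$ is a joining of $Z_{n}$ and $Y$, so $W_{n}=Z_{n}\times Y$ by $(Z_{n},T)\perp (Y,S)$. Taking the union over $n$ gives $(\bigcup_{n}Z_{n})\times Y\subseteq W$, and closing up gives $X\times Y=\overline{\bigcup_{n}Z_{n}}\times Y\subseteq W$; hence $W=X\times Y$, i.e. $(X,T)\perp(Y,S)$.

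For the implication $\Rightarrow$ I would first reduce to a density statement. Every transitive subsystem $Z\subseteq X$ equals $\overline{orb(z,T)}$ for any of its transitive points $z$, and conversely every orbit closure $\overline{orb(z,T)}$ is transitive with transitive point $z$. So it is enough to show that the set
\[
G=\{z\in X:\ (\overline{orb(z,T)},T)\perp (Y,S)\}
\]
is dense, for then a countable dense subset $\{z_{n}\}\subseteq G$ yields transitive subsystems $Z_{n}=\overline{orb(z_{n},T)}\perp (Y,S)$ with dense union. Applying Proposition \ref{hy-main}(2) to the transitive system $(\overline{orb(z,T)},T)$ with transitive point $z$, the condition $z\in G$ is equivalent to $N_{T}(z,U)\in\mathcal{F}^{*}_{Y}$ for every neighbourhood $U$ of $z$, i.e. to $\overline{orb((z,y),T\times S)}=\overline{orb(z,T)}\times Y$ for every $y\in Y$.

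The hard part is the density of $G$, and I would attempt it contrapositively. If $G$ is not dense, then, refining a neighbourhood of a point outside $\overline{G}$ through countable bases of $X$ and of $Y$ and invoking the Baire category theorem, one can fix a single nonempty open box $U\times V\subseteq X\times Y$ and a nonempty open $W'\subseteq X$ so that every $z\in W'$ admits $y_{z}\in Y$ with $orb((z,y_{z}),T\times S)\cap(U\times V)=\emptyset$. The set $J^{*}=\overline{\bigcup_{z\in W'}orb((z,y_{z}),T\times S)}$ is then closed, $T\times S$-invariant, contained in $(U\times V)^{c}$, and projects onto $Y$ (each orbit closure does, $(Y,S)$ being minimal). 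The genuine obstacle is that $p_{1}(J^{*})$ is only a closed invariant subset of $X$ with nonempty interior, and need not be all of $X$, because in the general case $(X,T)$ is not transitive; thus $J^{*}$ is not yet a joining of $X$ and $Y$ and so contradicts nothing. I would repair this by enlarging $J^{*}$ to $J=J^{*}\cup(C\times Y)$, where $C$ is the smallest closed invariant subset of $X$ containing the open set $X\setminus p_{1}(J^{*})$; then $p_{1}(J)=X$ and $p_{2}(J)=Y$, so $J$ is a joining, and one checks $J\neq X\times Y$ using minimality of $(Y,S)$ (no point whose orbit meets $U$ can have its whole fibre inside $J^{*}$), contradicting $(X,T)\perp(Y,S)$. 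The delicate point, where I expect the real effort to lie, is the degenerate case in which the invariant closure $C$ exhausts $X$ so that $J$ collapses to the full product; there I would use the density of minimal points \cite[Theorem 4.3]{HY05} to choose the forbidden box so that $X\setminus p_{1}(J^{*})$ remains dynamically visible, keeping $J$ proper. Handling this non-transitivity of $X$ cleanly is the main difficulty of the argument.
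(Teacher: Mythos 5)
You have the right skeleton: the reduction to a fixed minimal $(Y,S)$, the $\Leftarrow$ direction via $W_n=W\cap(Z_n\times Y)$, and the reformulation of $\Rightarrow$ as density of $G=\{z:\ (\overline{orb(z,T)},T)\perp(Y,S)\}$ via Proposition \ref{hy-main}(2) are all correct, and your Baire step (countable bases, closedness of the sets $E_{n,m}=\{z:\exists y,\ orb((z,y),T\times S)\cap(U_n\times V_m)=\emptyset\}$, hence a nonempty open $W'\subseteq U$ on which every point has a ``bad fibre'' avoiding a fixed box $U\times V$) is sound and completable. (Note the paper itself gives no proof of this proposition; it cites \cite[Corollary 4.6]{HY05}, so the comparison is with that argument.) But your proof is genuinely incomplete at exactly the point you flag: the passage from $J^{*}=\overline{\bigcup_{z\in W'}orb((z,y_z),T\times S)}$ to a \emph{proper} joining. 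Your repair $J=J^{*}\cup(C\times Y)$, with $C$ the invariant closure of $X\setminus p_1(J^{*})$, fails in entirely typical situations: whenever forward orbits from outside $p_1(J^{*})$ accumulate on all of $U$ (already when $X$ is transitive, the invariant closure of any nonempty open set is all of $X$, so $C=X$ and $J$ collapses to $X\times Y$). Your proposed rescue --- using density of minimal points to ``choose the forbidden box so that $X\setminus p_1(J^{*})$ remains dynamically visible'' --- is not a proof step and cannot be executed as stated, because the box $U\times V$ is produced by the Baire argument and is not at your disposal; moreover density of minimal points requires $(X,T)\perp\mathcal{M}$, whereas the per-$Y$ statement you reduced to should (and does) hold assuming only $(X,T)\perp(Y,S)$.

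The missing idea is to abandon the ``fill with full fibres'' strategy altogether and instead extend the bad-fibre selection to \emph{every} $x\in X$, for the shrunken box $W'\times V$, by a first-entrance pull-back. Fix $x\in X$. If $orb(x,T)$ never meets $W'$, then \emph{any} $y\in Y$ satisfies $orb((x,y),T\times S)\cap(W'\times V)=\emptyset$. Otherwise let $n_0$ be the first time with $T^{n_0}x\in W'$, let $y_z$ be the bad fibre of $z=T^{n_0}x\in W'$ (its orbit avoids $U\times V\supseteq W'\times V$), and choose $y\in S^{-n_0}(y_z)$, which is possible since the minimal map $S$ is surjective; then for $n\ge n_0$ the pair $(T^nx,S^ny)$ avoids $U\times V$, and for $n<n_0$ one has $T^nx\notin W'$, so $orb((x,y),T\times S)\cap(W'\times V)=\emptyset$. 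Consequently the maximal invariant set $K=\bigcap_{n\ge 0}(T\times S)^{-n}\bigl((W'\times V)^{c}\bigr)$ is closed, invariant, satisfies $p_1(K)=X$ and (by minimality of $Y$) $p_2(K)=Y$, yet misses the nonempty open set $W'\times V$: a proper joining, contradicting $(X,T)\perp(Y,S)$ directly, with no filling and no case analysis on $C$. With this replacement your outline becomes a complete proof; as written, the $\Rightarrow$ direction is not proved.
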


It is natural to conjecture the following: {\em If $(X,T)\perp \mathcal M$, then there are
countably many transitive subsystems of $(X,T)$ such that their union is dense in $X$ and each
of them is in ${\mathcal M}^\perp$.}
We remark that this conjecture is not true, since there is a distal system disjoint from all
minimal systems, see \cite[Example 4.10]{HY05}.

Together with Theorem \ref{train-y} and Proposition \ref{hunagye05} we get a description of a
dynamical system disjoint with all minimal systems. Since the characterization is not easy to handle,
it is a natural question to get some other intrinsic characterizations.

\section{Applications}

In this section we give several applications of the main theorem.

\subsection{Some sufficient conditions}\
\medskip

We say that a t.d.s. $(X,T)$ has {\em dense distal sets} if for each nonempty open subset
$U$ of $X$, there is a distal point $C$ of $(K(X),T_K)$ such that $C
\subset U$. It is shown that a system $(X,T)$ is a weakly mixing system with dense distal sets if and only if
$(K(X),T_K)$ is a weakly mixing system with dense distal points \cite{LYY15}.

Recall that a point is distal if and only if for any neighbourhood $U$ of $x$ and any open neighborhood $V$ of a minimal system $(Y,S)$, $N_{T\times S}\left((x,y),U \times V\right)$ is syndetic for any $y\in V$ if and only if $x$ is $IP^*$-recurrent \cite[Theorem 9.11.]{Fur81}. Hence by Theorem \ref{train}, we have the following corollary easily.

\begin{cor}\label{cor-result-1} The following classes are subset of $\mathcal{M}^\perp$.

\begin{enumerate}
\item $(X,T)$ is weakly mixing with dense distal sets.

\item $(X,T)$ is weakly mixing with dense distal points.

\end{enumerate}
\end{cor}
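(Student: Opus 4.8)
The plan is to handle the two items separately, obtaining (2) as a direct application of Theorem~\ref{train} and then deducing (1) from (2) by passing to the hyperspace system.

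For (2), assume $(X,T)$ is weakly mixing (hence transitive) with a dense set of distal points. I verify clause~(3) of Theorem~\ref{train}. Fix a minimal system $(Y,S)$, a nonempty open set $U\subset X$ and a nonempty open set $V\subset Y$. Using density of the distal points, I choose a distal point $x\in U$. By the Auslander--Ellis theorem (Theorem~\ref{thmAuslander}) every distal point is a minimal point, since any minimal point in $\overline{orb(x,T)}$ proximal to $x$ must equal $x$; hence $x$ is an eligible witness for~(3). Finally, the characterization of distality recalled immediately before the corollary (from \cite[Theorem~9.11]{Fur81}) says that, because $x$ is distal, $N_{T\times S}((x,y),U\times V)$ is syndetic for every $y\in V$, with $U$ playing the role of a neighbourhood of $x$. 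This is precisely condition~(3) of Theorem~\ref{train}, so $(X,T)\in\mathcal M^\perp$.

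For (1), assume $(X,T)$ is weakly mixing with dense distal sets. By the equivalence from \cite{LYY15} quoted above, this is the same as $(K(X),T_K)$ being a weakly mixing system with dense distal points. Applying part~(2) to $(K(X),T_K)$ gives $(K(X),T_K)\in\mathcal M^\perp$. Since $(K(X),T_K)$ is moreover weakly mixing, Proposition~\ref{JYY-thm}(1) transfers the conclusion back to the base system, yielding $(X,T)\in\mathcal M^\perp$.

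I expect no serious obstacle, as the text itself flags the corollary as easy. The only point requiring care is the matching of quantifiers between the distality characterization and Theorem~\ref{train}(3): one must check that the syndeticity of $N_{T\times S}((x,y),U\times V)$ holds uniformly over all $y\in V$, and that the ambient neighbourhood of $x$ may be taken to be the prescribed open set $U$. Both are immediate once the distal point is chosen inside $U$. Notably, the $\F_{dps}=\F_C$ machinery of Theorem~\ref{C-thm} is not needed here; the argument reduces to quoting Theorem~\ref{train}, the Auslander--Ellis theorem, the distality characterization, and Proposition~\ref{JYY-thm}(1).
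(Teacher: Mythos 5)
Your proposal is correct, and part (2) is essentially the paper's own argument: the proof in the paper consists of recalling the characterization of distal points from \cite[Theorem 9.11]{Fur81} and then invoking Theorem~\ref{train}, exactly as you do (your Auslander--Ellis detour via Theorem~\ref{thmAuslander} to show the distal point is minimal is correct but unnecessary, since clause (2) of Theorem~\ref{train} does not require the witness $x$ to be minimal; it does, harmlessly, upgrade your verification to clause (3)). Where you genuinely diverge is in part (1). The paper's intended one-line argument stays in the base system: given a nonempty open $U\subset X$, pick a distal point $C$ of $(K(X),T_K)$ with $C\subset U$; then for any $x\in C$, any minimal $(Y,S)$, any nonempty open $V\subset Y$ and any $y\in V$, one has
$$N_{T\times S}\left((x,y),U\times V\right)\supseteq N_{T_K\times S}\left((C,y),\langle U\rangle\times V\right),$$
where $\langle U\rangle=\{A\in K(X): A\subset U\}$ is a neighbourhood of $C$, and the right-hand side is syndetic by the same distality characterization applied in $(K(X),T_K)$; this verifies Theorem~\ref{train}(2) for $(X,T)$ directly. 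You instead pass entirely to the hyperspace: you use the equivalence from \cite{LYY15} (dense distal sets in $(X,T)$ iff dense distal points in the weakly mixing $(K(X),T_K)$), apply your part (2) to $(K(X),T_K)$ (which is transitive and surjective, so Theorem~\ref{train} applies there), and descend via Proposition~\ref{JYY-thm}(1). Both routes are valid; the direct route avoids invoking Proposition~\ref{JYY-thm} and the \cite{LYY15} equivalence, while yours avoids the small hyperspace-to-base containment computation above at the cost of leaning on those two quoted results. Your closing remark that the $\F_{dps}=\F_C$ machinery is not needed is accurate.
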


\begin{cor}\label{cor-result-2}
If $(X,T)$ is a weakly mixing t.d.s. such that for any minimal system $(Y,S)$ and any nonempty open set $U$ of $X$,  there are $x\in U$ and
a nonempty open set $V$ of $Y$ such that $(x,y)$ is minimal for any $y\in V$, then $(X,T)\in \mathcal{M}^\perp$.

In particular, if for any minimal system $(Y,S)$ and each nonempty open subset $U$ of $X$,
there is a minimal subset $M$ of $(X,T)$ which is disjoint from $Y$ and $U\cap M\not=\emptyset$, then $(X,T)\in \mathcal{M}^\perp$.
\end{cor}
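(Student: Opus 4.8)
The plan is to verify the intrinsic characterization of Theorem~\ref{train}. Since weak mixing is assumed, it suffices to show that $(X,T)$ is an $M$-system and that for every minimal $(Y,S)$, every nonempty open $U\subseteq X$ and every nonempty open $V\subseteq Y$ there is a (minimal) point $x\in U$ with $N_{T\times S}((x,y),U\times V)$ syndetic for all $y\in V$. First I would record two elementary reductions. (i) $(X,T)$ is a weakly mixing $M$-system: weak mixing gives transitivity, and applying the hypothesis to each basic open $U$ produces $x\in U$ with $(x,y)$ minimal for some $y$, whence $x=p_1(x,y)$ is a minimal point, so minimal points are dense. (ii) The classical fact that a point $z$ is minimal (almost periodic) if and only if $N(z,W)$ is syndetic for every neighbourhood $W$ of $z$. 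Thus, once the hypothesis hands us $x\in U$ together with an open $\Lambda\subseteq Y$ such that $(x,y)$ is minimal for all $y\in\Lambda$, the set $N_{T\times S}((x,y),U\times\Lambda)$ is \emph{automatically} syndetic for every $y\in\Lambda$, because $U\times\Lambda$ is then a neighbourhood of the minimal point $(x,y)$.

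The main obstacle is the quantifier mismatch: Theorem~\ref{train} asks for the return condition for \emph{every} open $V\subseteq Y$, while the hypothesis supplies only the single open set $\Lambda$ attached to $U$. To bridge this I would argue directly with a joining, in the spirit of the proof of Theorem~\ref{train-y}, using the minimality of $(Y,S)$ to move the relevant second coordinate into $\Lambda$. Concretely, let $J\subseteq X\times Y$ be a joining. Fix an open $U$, take $x\in U$ and $\Lambda$ from the hypothesis, and recall that $P[x]$ is residual by Lemma~\ref{akin-k}. Since $\bigcup_{k}S^{-k}\Lambda=Y$ by minimality, the fibre $J\cap(X\times\Lambda)$ is nonempty, and one expects (matching the residual proximal cell $P[x]$ against the first projection of this fibre by a Baire‑category argument, using that the $T$-preimages of that projection cover $X$) to find $z\in P[x]$ and $w\in\Lambda$ with $(z,w)\in J$ and $(z,x)$ proximal. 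Now $(x,w)$ is minimal because $w\in\Lambda$, and the standard ``proximal $+$ almost periodic'' step applies: pushing $(z,w)$ along the proximal times $k_i$ with $d(T^{k_i}z,T^{k_i}x)\to 0$ produces a common limit $(p,q)=\lim(T^{k_i}z,S^{k_i}w)\in J$ which also equals $\lim(T^{k_i}x,S^{k_i}w)\in\overline{orb((x,w),T\times S)}$; since this orbit closure is a minimal set meeting the closed invariant set $J$, it lies entirely in $J$. Letting $w$ range over a dense subset of $\Lambda$ (the same matching works over every nonempty open subset of $\Lambda$) and taking closures yields the horizontal section $\{x\}\times\Lambda\subseteq J$.

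The step I expect to be genuinely delicate is upgrading these horizontal sections to $J=X\times Y$. A single minimal set produced by the hypothesis may be ``thin'' (graph‑like) and individually avoid a target box $U\times V$, so one cannot finish from one minimal point; indeed, if $J\cap(U\times V)=\emptyset$ the naive argument only yields two disjoint syndetic sets, which is not contradictory. The resolution must exploit that the hypothesis furnishes minimal points over an \emph{open} set $\Lambda$ of second coordinates simultaneously over the same $x$, feeding an entire open horizontal section into $J$, and then combine the fact that $N_S(\cdot,\cdot)$ between nonempty open sets of $Y$ is syndetic (minimality of $Y$) with the weak mixing of $X$ (to spread the section off the minimal subsystem $\overline{orb(x,T)}$ onto all of $X$) so as to force $J$ to be dense, hence equal to $X\times Y$. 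This coupling of a category/proximality argument with minimality of $Y$ is the crux of the proof.

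Finally, the ``in particular'' clause reduces to the clean case $V=Y$, which sidesteps the obstacle entirely. If for each nonempty open $U$ there is a minimal subset $M\subseteq X$ with $M\perp(Y,S)$ and $M\cap U\neq\emptyset$, then $M$ and $Y$ are disjoint minimal systems; every minimal subset of $M\times Y$ is a joining and so must equal $M\times Y$, whence $M\times Y$ is itself minimal. Consequently every point of $M\times Y$ is almost periodic, and choosing $x\in M\cap U$ gives that $(x,y)$ is minimal for \emph{all} $y\in Y$. Thus the hypothesis of the first statement holds with $V=Y$, and the conclusion $(X,T)\in\mathcal{M}^\perp$ follows from the first part.
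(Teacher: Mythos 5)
Your ``in particular'' paragraph is correct and is essentially the paper's argument: since $M\perp (Y,S)$ and both are minimal, any minimal subset of $M\times Y$ projects onto $M$ and onto $Y$, hence is a joining and equals $M\times Y$; so $M\times Y$ is minimal and $(x,y)$ is minimal for every $x\in M\cap U$ and every $y\in Y$. The first statement, however, you do not actually prove, and you say as much: the passage from the horizontal sections $\{x\}\times\Lambda\subseteq J$ to $J=X\times Y$ is left as something the argument ``must exploit'', not something you establish. Even the preliminary matching step is shaky as written: from the covering $X=\bigcup_{k\le n}T^{-k}\bigl(p_1(J\cap(X\times\Lambda))\bigr)$ and Lemma \ref{akin-k} you can place some $z_0\in P[x]$ in a nonmeager preimage, but that only puts $T^kz_0$ in the projection, and $T^kz_0$ need not be proximal to $x$ (proximality of $(z_0,x)$ gives proximality of $(T^kz_0,T^kx)$, not of $(T^kz_0,x)$). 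So the joining route, as sketched, does not close, and the first statement remains unproved.

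The missing idea --- the one the paper uses, and it dissolves your ``quantifier mismatch'' in one line --- is that $\id\times S$ commutes with $T\times S$, i.e.\ it is an endomorphism (a factor map) of $(X\times Y,T\times S)$; hence if $(x,y)$ is minimal, so is $(x,Sy)$, and inductively $(x,S^iy)$ for all $i$. Since $(Y,S)$ is minimal and $S$ is surjective, every point of $Y$ lies in some forward image $S^iV$ (the backward orbit of any point generates a closed forward-invariant set, which must be all of $Y$ and so meets $V$), whence the single open set $V$ attached to $U$ upgrades to: $(x,y)$ is minimal for \emph{every} $y\in Y$. Then for any nonempty open $V'\subseteq Y$ and any $y\in V'$, the set $U\times V'$ is a neighbourhood of the minimal point $(x,y)$, so $N_{T\times S}((x,y),U\times V')$ is syndetic --- exactly condition (2) of Theorem \ref{train}, with $x=x(U)$ independent of $V'$, and the theorem finishes the proof. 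Note that your own observation (ii) already contained this last step; what you lacked was the propagation of minimality along the second coordinate, and without it your category/proximality scheme (which at best reproduces part of the proof of Theorem \ref{train} in a harder setting) cannot be completed as stated.
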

\begin{proof} We assume that for any nonempty open set $U$ of $X$, there are $x\in U$ and a
nonempty open set $V$ of $Y$ such that $(x,y)$ is minimal for any $y\in V$. Since $(Y,S)$ is minimal, there is $n\in\N$ such that $\cup_{i=1}^n S^nV=Y$.
We note that if $(x,y)$ is minimal then $(x,Sy)$ is minimal. This follows from the fact that $\id\times S: (X\times Y,T\times S)\rightarrow (X\times Y,T\times S)$
is a factor map. Thus, $(x,y)$ is minimal for any $y\in Y$. Applying Theorem \ref{train}, we get the proof of the first statement.

Note that when the assumption of the second statement holds, as $M$ is disjoint from $Y$, we know that
$(x,y)$ is a minimal point of $T\times S$ for any $x\in U\cap M$ and any $y\in Y$.
\end{proof}

We remark that in fact Oprocha gave a very nice criteria which covers Corollary \ref{cor-result-1} and \ref{cor-result-2}  above. %For completeness we repeat the proof.

\begin{prop}\cite{O17} \label{1717}
Let $(X,T)$ be a weakly mixing system. If for each nonempty open subset $U$ of $X$ there is $x\in U$ such that
$N_T(x,U)$ is an  $IP^*$-set, then $(X,T)$ is
disjoint from all minimal systems, i.e. $(X,T)\in \mathcal{M}^\perp$.
\end{prop}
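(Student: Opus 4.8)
The plan is to reduce the statement to condition (2) of Theorem \ref{train} and then feed it through the central-set machinery of Theorem \ref{C-thm}. Since $(X,T)$ is weakly mixing it is in particular transitive, so Theorem \ref{train} applies; thus it suffices to show that for every minimal system $(Y,S)$, every nonempty open $U\subset X$ and every nonempty open $V\subset Y$ there is some $x\in U$ with $N_{T\times S}((x,y),U\times V)$ syndetic for all $y\in V$. The whole point of the $IP^*$ hypothesis is that it will supply a single $x$ that works uniformly in $y$ and $V$.

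First I would fix $(Y,S)$, $U$ and $V$, and use the assumption to choose $x\in U$ with $N_T(x,U)$ an $IP^*$-set; note that this $x$ depends only on $U$. Since every $IP^*$-set is a $C^*$-set, $N_T(x,U)$ meets every central set. Now fix $y\in V$ and an arbitrary thick set $A$. Because $y\in V$, the set $N_S(y,V)$ is dynamical syndetic, so $A\cap N_S(y,V)$ lies in $\F_{dps}$, which by Theorem \ref{C-thm} equals $\F_C$; that is, $A\cap N_S(y,V)$ is a central set. Hence $N_T(x,U)\cap\big(A\cap N_S(y,V)\big)\neq\emptyset$. Using the identity $N_{T\times S}((x,y),U\times V)=N_T(x,U)\cap N_S(y,V)$, this says $A\cap N_{T\times S}((x,y),U\times V)\neq\emptyset$. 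As $A$ ranges over all thick sets and $\F_s=\F_t^*$, this forces $N_{T\times S}((x,y),U\times V)$ to be syndetic, and since $y\in V$ was arbitrary we obtain condition (2) of Theorem \ref{train}, whence $(X,T)\in\mathcal M^\perp$.

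The only real content — the step I would flag as the crux rather than a genuine obstacle — is recognising that the duality $\F_s=\F_t^*$ converts the required syndeticity of the transfer-time set into the assertion that $N_T(x,U)$ meets every set of the form $A\cap N_S(y,V)$ with $A$ thick, and that Theorem \ref{C-thm} identifies exactly these sets (as $A$ varies) with the central sets. Once this dictionary is in place, the $C^*$-property of $N_T(x,U)$ closes the argument at once, and crucially the chosen $x$ is independent of $y$, $V$ and $A$. Alternatively, the same observation feeds directly into Theorem \ref{thm-central}: taking $D$ to be the countable collection of base-point choices $x_{U'}$ over a countable base of $X$ verifies its hypothesis (with $x=x(B)$ in fact independent of $B$ as well), giving a second route to the conclusion.
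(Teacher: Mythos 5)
Your proposal is correct and follows essentially the paper's own route: the paper deduces Proposition \ref{1717} from its generalization, Proposition \ref{C-set}, whose proof is ``a direct application of Theorem \ref{thm-central}'', and your argument is exactly that application unwound --- the observation that every $IP^*$-set is a $C^*$-set, plus the thick-set duality $\F_t^*=\F_s$ and the identification $\F_{dps}=\F_C$ of Theorem \ref{C-thm}, which is precisely the computation appearing inside the paper's proof of Theorem \ref{thm-central}. Your routing through condition (2) of Theorem \ref{train} (with $x$ depending only on $U$, hence uniform in $y$, $V$, and $A$) is a harmless inlining of the same machinery, not a genuinely different method.
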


%Finally we give an application which generalizes some result by Oprocha \cite{O17}.
Using results in the previous section we have the following generalization.

\begin{prop}\label{C-set}
Let $(X,T)$ be a weakly mixing system. If for each nonempty open subset $U$ of $X$
there is $x\in U$ such that $N_T(x,U)$ is a $C^*$-set, then $(X,T)$ is disjoint from
all minimal systems, i.e. $(X,T)\in \mathcal{M}^\perp$.
\end{prop}
\begin{proof} It is a direct application of Theorem \ref{thm-central}.
\end{proof}

We note that the condition in Proposition \ref{1717} is not necessary as the example in \cite{O17} shows.
Now we show that in fact the same example in \cite{O17} indicates that condition in Proposition \ref{C-set} is also not necessary. One can verify it following the proof of Theorem 1.4 of \cite{O17}. For the sake of completeness, we give a slightly different proof.
To do so we need a lemma and a proposition.

\begin{lem} \label{wm-ye}Let $(X,T)$ be a t.d.s., $x\in X$ and $M$ be a minimal weakly mixing subsystem. If $x$ is proximal to some point in $M$
then $P[x]\cap M$ is dense in $M$.
\end{lem}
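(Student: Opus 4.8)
The plan is to deduce the density of $P[x]\cap M$ from the residuality of proximal cells \emph{inside} $M$ (Lemma \ref{akin-k}), transferring this from a suitable minimal point of $M$ to the point $x$ by means of the enveloping semigroup $E(X)$ of $(X,T)$. First I would record that $M\subseteq\overline{orb(x,T)}$: choosing $T^{n_i}x,T^{n_i}z\to p$ for a point $z\in M$ proximal to $x$ forces $p\in M$ (as $M$ is closed and invariant), so $\overline{orb(x,T)}\cap M$ is a nonempty closed invariant subset of the minimal set $M$ and hence equals $M$. Next I would produce a minimal idempotent collapsing $(x,z)$: the equalizer $K=\{r\in E(X):rx=rz\}$ is nonempty (proximality), closed, and a left ideal, so it contains a minimal left ideal $L$, and $L$ contains an idempotent $u$. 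Since $M$ is $E(X)$-invariant, $z':=ux=uz\in M$, and $u^2=u$ gives $uz'=z'$; equivalently, one may invoke Theorem \ref{thmAuslander} to get a minimal point $z'\in M\subseteq\overline{orb(x,T)}$ proximal to $x$ and then attach the idempotent fixing it.

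The decisive observation is then elementary: for any $w\in M$ with $uw=z'$ we have $ux=uw$, so the \emph{single} element $u\in E(X)$ collapses the pair $(x,w)$, whence $(x,w)$ is proximal. Therefore
$F:=\{w\in M:uw=uz'\}\subseteq P[x]\cap M$.
The set $F$ is precisely the $u$-class of $z'$; it is contained in the full proximal cell $P_M[z']$, which by Lemma \ref{akin-k} (applied to the weakly mixing system $M$) is residual, hence dense, in $M$. Consequently it suffices to prove that $F$ itself is dense in $M$. At this stage I would exploit that $T$ is a continuous element of the right‑topological semigroup $E(X)$, so left multiplication by $T$ is continuous and $u$ commutes with every $T^n$; thus $u(T^nw)=T^n(uw)$, the map $w\mapsto uw$ is $T$‑equivariant, and $\overline{uM}$ is a closed invariant subset of $M$ containing $z'$, hence all of $M$.

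The hard part will be upgrading this to the \emph{density of the single fiber} $F=u^{-1}(z')\cap M$, rather than merely the density of the image $uM$ or of the larger cell $P_M[z']$. The genuine obstruction is that $E(X)$ may possess several minimal left ideals, so the element collapsing a given proximal pair $(z',w)$ need not be our fixed $u$; thus one cannot simply identify $F$ with $P_M[z']$ (equivalently, proximality need not be transitive), and density of $F$ must be argued directly. My intended route is to run an Akin‑type Baire‑category argument adapted to the idempotent: writing $F$ as a suitable intersection and using weak mixing of $M$ (so that $N(U,V)\in\F_t$ and finite intersections of transfer sets are nonempty) together with the $T$‑equivariance of $w\mapsto uw$ to show that for every nonempty open $W\subseteq M$ there is $w\in W$ with $uw=z'$; alternatively, one argues that the invariant relation $\{(w,w'):uw=uw'\}$ cannot have a nontrivial (topologically nonmeager) class structure in a weakly mixing minimal system, forcing each $u$‑class to be residual. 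Either way, once $F$ is shown dense, the inclusion $F\subseteq P[x]\cap M$ finishes the proof.
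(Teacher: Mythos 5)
Your reduction is sound as far as it goes: the equalizer $K=\{r\in E(X):rx=rz\}$ is indeed a closed left ideal, the minimal idempotent $u\in L\subseteq K$ gives $z'=ux=uz\in M$ with $uz'=z'$, and every $w\in M$ with $uw=z'$ is collapsed with $x$ by the single element $u$, so $F=\{w\in M:uw=z'\}\subseteq P[x]\cap M$ is correct. But the proof stops exactly where the lemma begins: the density of $F$ in $M$ is the entire content, and you leave it as ``the hard part'' with two sketched routes, neither executed and neither viable as stated. The second route (``each $u$-class is residual'') is provably false whenever $M$ is nontrivial: since left multiplication by the continuous map $T$ is continuous on $E(X)$, $u$ commutes with $T$, so $uM\supseteq orb(uz,T)$ is dense in $M$; hence the partition of $M$ into fibers $u^{-1}(w')$, $w'\in uM$, has densely many distinct classes, and two disjoint sets cannot both be residual in the Baire space $M$. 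The first route has no mechanism: $u$ is a fixed pointwise limit of some net $T^{n_\alpha}$ and $w\mapsto uw$ is in general discontinuous, so the exact condition $uw=z'$ for the pre-chosen $u$ is not a countable intersection of open conditions that weak mixing (thickness of $N_T(U,V)$, nonempty finite intersections of transfer sets) can realize; nothing in the setup lets you steer the value of a fixed idempotent on points of an arbitrary open set $W$. Note also that fixing $u$ in advance is strictly harder than what the lemma needs: it would suffice to produce, for each nonempty open $W\subseteq M$, \emph{some} minimal idempotent $v=v(W)$ with $vx\in W$ (minimal points of $P[x]$ are exactly the images of $x$ under minimal idempotents), but your argument does not prove that weaker statement either.

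The paper avoids $E(X)$ entirely and runs an Akin--Kolyada-type nested construction directly on return-time sets, which is where the weak mixing of $M$ actually enters. With $G_k$ the $1/k$-ball at $z$: proximality of $(x,z)$ makes $\{n:d(T^nx,T^nz)<\ep\}$ thick, and minimality of $z$ makes $N_T(z,B_\ep(z))$ syndetic, so $N_T(x,G_k)\in\F_{ps}$; weak mixing plus minimality of $M$ makes $N_T(U_{k-1},G_k\cap M)$ thickly syndetic (\cite[Theorem 4.7]{HY02}); a piecewise syndetic set meets a thickly syndetic set in an infinite set, so one can choose $n_k>k$ and nested open sets $U_k$ with $\overline{U_k}\subseteq U_{k-1}$, $T^{n_k}x\in G_k$ and $T^{n_k}\overline{U_k}\subseteq G_k$; any $y\in\bigcap_k\overline{U_k}$ then lies in $U$ and satisfies $d(T^{n_k}x,T^{n_k}y)\le 2/k$, so $y\in P[x]\cap U$. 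If you want to keep an enveloping-semigroup framing, you must let the collapsing idempotent vary with the target open set rather than fix $u$ once; as written, the proposal has a genuine gap at its central claim.
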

\begin{proof} We follow the argument in \cite{AK}. Assume that $x$ is proximal to a point $z\in M$. Let $G_k$ be the open balls (of $X$) of radius $1/k$ centred at $z$.
%Let $\mathcal{F}$ be the family generated by sets of forms $N(U_1,U_2)$, where $U_1,U_2$ are nonempty open subsets of $M$. Let $G_k$
Let $U$ be a nonempty open subset of $M$.

Set $U_0=U$ and define inductively open sets $U_1,U_2,\ldots$ of $M$ and positive integers $n_k$ as follows.

Since $N_T(x, G_k)\in \mathcal{F}_{ps}$, and $N_T(U_{k-1},G_k\cap M)\in \mathcal{F}_{ts}$ (\cite[Theorem 4.7]{HY02}) we have
$$N_T(x, G_k)\cap N_T(U_{k-1},G_k\cap M)$$ is infinite.
So we can choose a nonempty open set $U_k$ of $M$ with closure contained in $U_{k-1}$ and an integer $n_k>k$ such that
$T^{n_k}x\in G_k$ and $T^{n_k}\overline{U_k}\subset G_k$. If $y$ is a point of the nonempty intersection
$\cap_k \overline{U}_k=\cap_k U_k$ then $T^{n_k}x\in G_k$ and $T^{n_k}(y)\in G_k$ and so $d(T^{n_k}x,T^{n_k}y))\le 2/k$. Thus $y$ is in $U$ and $x,y$ are proximal.
\end{proof}

\begin{prop} Let $(X,T)$ be a t.d.s. such that there are countably many non-trivial minimal subsystems $M_i$
which are weakly mixing, $\cup_{i=1}^\infty M_i$ is dense in $X$ and $\cup_{i=1}^\infty M_i$ is the set of minimal points of $X$.
Then there is a nonempty open subset $U$ of $X$ such that for any $z\in U$, $N_T(z,U)$ is not a $C^*$-set.
\end{prop}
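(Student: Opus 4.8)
The plan is to pass to a dual reformulation and then to manufacture $U$ by a compactness argument that uses the non-triviality of the $M_i$ in an essential way. Recall that $N_T(z,U)$ is a $C^*$-set, i.e. lies in $\F_C^*$, precisely when its complement $\Z_+\setminus N_T(z,U)=N_T(z,X\setminus U)$ fails to be central. So it is enough to produce a nonempty open $U$ for which $N_T(z,X\setminus U)\in\F_C$ for \emph{every} $z\in U$. By the very definition of a central set, $N_T(z,X\setminus U)$ will be central once there is a minimal point $p$ proximal to $z$ together with a neighbourhood $W$ of $p$ satisfying $W\subseteq X\setminus U$: indeed then $N_T(z,W)\subseteq N_T(z,X\setminus U)$ and $N_T(z,W)$ is central, whence so is the larger set. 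Since such a $W$ can be chosen inside $X\setminus U$ exactly when $p\notin\overline U$, the whole statement reduces to finding a nonempty open $U$ so that every $z\in U$ is proximal to some minimal point lying outside $\overline U$.

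Granting a $U$ with the property that $M_i\not\subseteq\overline U$ for all $i$, the per-point step is then straightforward. Fix $z\in U$ and choose any minimal subset of $\overline{orb(z,T)}$; being composed of minimal points it equals one of the $M_j$. The Auslander--Ellis theorem (Theorem \ref{thmAuslander}) furnishes a minimal point $p_0\in M_j$ proximal to $z$, and since $M_j$ is weakly mixing, Lemma \ref{wm-ye} shows that $P[z]\cap M_j$ is dense in $M_j$. Because $M_j\setminus\overline U$ is a nonempty relatively open subset of $M_j$, I can pick $p\in P[z]\cap(M_j\setminus\overline U)$; then $p$ is a minimal point proximal to $z$ with $p\notin\overline U$. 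Taking a small ball $W$ about $p$ with $W\cap\overline U=\emptyset$ yields $N_T(z,W)\subseteq N_T(z,X\setminus U)$, exhibiting $N_T(z,X\setminus U)$ as central, so $N_T(z,U)$ is not a $C^*$-set.

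The main obstacle is therefore the existence of an open $U$ with $M_i\not\subseteq\overline U$ for all $i$, and this is exactly where the hypotheses are decisive. I would argue by contradiction: if the closure of every nonempty open set contained some $M_i$, I would construct a nested sequence of nonempty open sets $W_1\supseteq\overline{W_1}\supseteq W_2\supseteq\overline{W_2}\supseteq\cdots$ with $\mathrm{diam}(W_k)\to0$ and indices $i_k$ such that $M_{i_k}\subseteq\overline{W_k}$ while $\overline{W_{k+1}}\cap M_{i_k}=\emptyset$ (the only obstruction to continuing is $\overline{W_k}=M_{i_k}$, which would force $M_{i_k}$ to have interior and, via disjointness of distinct minimal sets, already gives a contradiction). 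Let $\{x^*\}=\bigcap_k\overline{W_k}$. Since $M_{i_k}\subseteq\overline{W_k}$ and $\mathrm{diam}(\overline{W_k})\to0$, the $M_{i_k}$ converge to $\{x^*\}$ in the Hausdorff metric; choosing $m_k\in M_{i_k}$ gives $m_k\to x^*$ with $Tm_k\in M_{i_k}$, so by continuity $Tx^*=\lim_k Tm_k=x^*$. Thus $x^*$ is a fixed point, hence a minimal point, so by hypothesis $x^*\in M_{j_0}$ for some $j_0$; but then $M_{j_0}=\overline{orb(x^*,T)}=\{x^*\}$ is trivial, contradicting the non-triviality of the $M_i$. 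This contradiction delivers the required $U$ and finishes the proof.
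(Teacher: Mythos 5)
Your argument is correct and is essentially the paper's own proof: you make $N_T(z,U)$ fail to be a $C^*$-set by exhibiting the central set $N_T(z,W)$ in its complement, where $W$ is a neighbourhood of a minimal point proximal to $z$ lying outside $\overline{U}$ (obtained from Auslander--Ellis, Theorem \ref{thmAuslander}, together with Lemma \ref{wm-ye}), and your nested-sets compactness argument producing a fixed point is exactly the justification the paper compresses into its parenthetical ``otherwise $T$ has a fixed point'' when choosing $U$ with ${\rm int}(U^c)\cap M_i\neq\emptyset$. The only blemish is typographical: your inclusion chain should read $W_1\supseteq\overline{W_2}\supseteq W_2\supseteq\overline{W_3}\supseteq\cdots$, i.e.\ $\overline{W_{k+1}}\subseteq W_k$, as the surrounding construction makes clear.
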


\begin{proof} First there is a nonempty open subset $U$ such that
for each $i\in \N$, $U\cap M_i\not=\emptyset$ implies that ${\rm int}(U^c)\cap M_i\not=\emptyset$ (otherwise $T$ has a fixed point), where ${\rm int}(A)$ is the interior of a subset $A$.
It is clear that $U\not=X$ and we may assume that ${\rm int}(U^c)\not=\emptyset$. We will show that for any $z\in U$, $N_T(z,U)$ is not a $C^*$-set.

%Assume the contrary that there is $z\in U$ such that $N(z,U)$ is a $C^*$-set.

(1) If there is $i\in\N$ such that $z\in M_i$ is a minimal point, then $z$ is proximal to
a point $y\in V$, where $y\in M_i$, $V$ is a nonempty open subset such that $U\cap V=\emptyset$ (since $M_i$ is weakly mixing and ${\rm int} (U^c)\cap M_i\not=\emptyset$).

(2) If $z$ is not a minimal point, then $z$ is proximal to a minimal point $y_1\in M_i$ for some $i\in\N$. By Lemma \ref{wm-ye} $z$ is proximal to a minimal point $y\in V$,
where $V$ is a nonempty open subset of $X$ such that $U\cap V=\emptyset$  (as ${\rm int} (U^c)\cap M_i\not=\emptyset$).

In the above two cases we have $N_T(z,V)$ is a $C$-set. This implies that $N_T(z,U)$ is not a $C^*$-set since  $U\cap V=\emptyset$.
\end{proof}

\subsection{$(X^n,T^{(n)})$ and $(X,T^n)$}\
\medskip

It is known that if $(X,T)$ is weakly mixing, then so are $(X^n,T^{(n)})$ and $(X,T^n)$ for any $n\in\N$.
Now we show

\begin{thm} \label{product}
Assume that a transitive system $(X,T)$ is disjoint  from all minimal systems.
Then $(X^n,T^{(n)})$  is also disjoint from  all minimal systems for any $n\in \N$, i.e.
 $(X,T)$ is strongly disjoint from all minimal systems.
%Consequently, if $(X,T)$ is disjoint from all minimal systems then it is strongly disjoint from all minimal systems.
\end{thm}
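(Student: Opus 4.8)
The plan is to verify, for the product system $(X^n,T^{(n)})$, the syndetic-return characterization of disjointness established in Theorem \ref{train2} (equivalently Theorem \ref{train}). Since $(X,T)$ is weakly mixing, so is $(X^n,T^{(n)})$, and in particular the latter is transitive, so these characterizations apply and it suffices to produce the required common-return data. The engine throughout will be the reformulation of disjointness of $(X,T)$ coming from the thick/syndetic duality $\F_s=\F_t^*$ together with Theorem \ref{C-thm}: as in the proof of Theorem \ref{thm-central}, the condition in Theorem \ref{train2}(2) for $X$ says precisely that for every nonempty open $U\subseteq X$ and every dynamical syndetic set $B$ there is a minimal point $x\in U$ with $N_T(x,U)\cap B$ syndetic, and that this $x$ may be chosen depending only on $U$ and $B$. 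The crucial feature is that this holds uniformly over all minimal systems.

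I would run an induction on $n$, keeping the stronger bookkeeping statement
(IH$_n$): for every minimal system $(Y,S)$, every $y\in Y$ and every nonempty open $\mathbf U\subseteq X^n$, there is $\mathbf x\in\mathbf U$ such that $(\mathbf x,y)$ is a minimal point of $(X^n\times Y,\,T^{(n)}\times S)$.
If IH$_n$ holds then $N_{T^{(n)}\times S}((\mathbf x,y),\mathbf U\times V)$ is syndetic for every neighbourhood $V$ of $y$ (a minimal point returns syndetically to any of its neighbourhoods), which is exactly what is needed to invoke Theorem \ref{train2}/\ref{train} and conclude $(X^n,T^{(n)})\perp\mathcal M$; the $Y$-trivial case of IH$_n$ moreover yields density of the minimal points of $X^n$, which removes the $M$-system concern. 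The inductive step is clean: given a box $\mathbf U=U_1\times\cdots\times U_n$, apply IH$_{n-1}$ to $(Y,S)$, $y$ and $U_1\times\cdots\times U_{n-1}$ to get $\mathbf x'$ with $(\mathbf x',y)$ minimal in $X^{n-1}\times Y$; set $(Z,R)=(\overline{orb((\mathbf x',y),T^{(n-1)}\times S)},\,T^{(n-1)}\times S)$, a minimal system with distinguished point $p=(\mathbf x',y)$; then feed the minimal system $(Z,R)$, the point $p$ and the open set $U_n$ into the base case. Under the obvious conjugacy $X\times(X^{n-1}\times Y)\cong X^n\times Y$ the resulting minimal point of $X\times Z$ is a minimal point $(\mathbf x,y)$ with $\mathbf x=(\mathbf x',x_n)\in\mathbf U$.

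Thus the whole theorem collapses to the base case, which is the heart of the matter:
(KEY): if $(X,T)$ is transitive and disjoint from all minimal systems, then for every minimal $(Z,R)$, every $p\in Z$ and every nonempty open $U\subseteq X$ there is $x\in U$ with $(x,p)$ a minimal point of $X\times Z$.
For (KEY) I would first record a consequence of disjointness: since $X\perp Z$, for any transitive point $x_0$ of $X$ and any $z\in Z$ the set $\overline{orb((x_0,z),T\times R)}$ is a joining of $X$ and $Z$, hence equals $X\times Z$; so $X\times Z$ is transitive and $Tran_T\times Z\subseteq Tran_{T\times R}$. Using that $X$ is an $M$-system (Lemma \ref{huang-ye-05}) together with Theorem \ref{thmAuslander} (Auslander–Ellis), I would then locate minimal subsets of $X\times Z$ meeting the fibre over $p$ and argue that their first-coordinate projections are dense, so that some such projection meets $U$.

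The hard part will be exactly this last step. Passing from a minimal point of $X\times Z$ lying over an arbitrary fibre to one lying over the prescribed fibre $\{p\}$ costs a shift of the $Z$-coordinate, and controlling that shift so that the first coordinate still lands in the prescribed open set $U$ is the genuine obstacle; mere pairwise information (that every two-coordinate projection of a joining is full) is too weak here, and the full strength of disjointness of $X$ from all minimal systems must be used. An equivalent way to meet the same crux is to carry the induction at the level of return sets, using the reformulated condition above to intersect $N_S(y,V)$ successively with return sets $N_T(x_i,U_i)$ of minimal points; the difficulty then reappears as the need to keep the accumulated intersection dynamical syndetic (a central set, by Theorem \ref{C-thm}) rather than merely syndetic, which is again precisely the minimal-point production asserted in (KEY).
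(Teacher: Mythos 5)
Your argument funnels entirely into the unproven claim (KEY), and that claim is genuinely stronger than anything the paper's characterizations supply. Theorem \ref{train2}(3) produces, for \emph{given} data $(Y,S)$, $y$, $V$ and $U$, a minimal point $x\in U$ with $N_{T\times S}((x,y),U\times V)$ syndetic, where $x$ is allowed to depend on $V$; your (KEY) demands a single $x\in U$ with $(x,p)$ minimal in $X\times Z$, i.e.\ syndetic returns to \emph{every} neighbourhood pair simultaneously. After the $\id\times S$ upgrade, this is essentially the hypothesis of Corollary \ref{cor-result-2}, which the paper records only as a \emph{sufficient} condition and never claims to be necessary for membership in $\mathcal{M}^\perp$. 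The Auslander--Ellis route you sketch does produce minimal points of $X\times Z$, and projecting a minimal subset of $M\times Z$ onto the minimal factor $Z$ even puts one over the prescribed fibre $\{p\}$; but nothing controls the first coordinate landing in the prescribed open set $U$ --- exactly the obstacle you flag yourself and do not overcome. So the proposal reduces the theorem to a statement that is not established anywhere (and may well be false), and the same crux reappears, as you note, in your alternative ``return-set'' induction, where one would need the accumulated intersection to stay dynamically syndetic rather than merely syndetic. There is also a quantifier mismatch in the step where you ``invoke Theorem \ref{train2}/\ref{train}'': IH$_n$ yields, for each single $y$, some $\mathbf{x}\in\mathbf{U}$, whereas Theorem \ref{train}(2) needs the \emph{same} $\mathbf{x}$ to work for all $y\in V$, and Theorem \ref{train2}(2) needs the witnesses drawn from one countable dense set $D$ universal over all minimal systems (countability is what furnishes the common proximal point in the proof of (2)$\Rightarrow$(1)); IH$_n$ as stated provides neither.

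None of this machinery is needed: the paper's proof is a short direct verification of Theorem \ref{train}(2) for $(X^n,T^{(n)})$, with no induction and no fibrewise minimal points. Given $W\supseteq W_1\times\cdots\times W_n$ and $V\subseteq Y$, transitivity of $(X,T)$ yields a single nonempty open $U\subseteq X$ and times $k_1,\dots,k_n$ with $T^{k_i}U\subseteq W_i$ for each $i$. One application of Theorem \ref{train} to $X$ itself gives $x\in U$ such that $N_{T\times S}((x,y),U\times V)$ is syndetic for all $y\in V$; then $x'=(T^{k_1}x,\dots,T^{k_n}x)\in W$ and
$N_{T^{(n)}\times S}\bigl((x',y),(T^{k_1}U\times\cdots\times T^{k_n}U)\times V\bigr)\supseteq N_{T\times S}\bigl((x,y),U\times V\bigr)$,
so the syndeticity passes to the product system uniformly in $y\in V$, which is exactly the form needed to reapply Theorem \ref{train}. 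The idea your approach misses is that one does not need independent minimal behaviour in each coordinate: a single orbit, entered at the delays $k_1,\dots,k_n$, populates all $n$ coordinates of the box at once, because returns of $x$ to $U$ are simultaneously returns of $T^{k_i}x$ to $T^{k_i}U\subseteq W_i$ for every $i$.
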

\begin{proof}
Let $(Y,S)$ be a minimal system. Assume that $W$ is a nonempty open subset of $X^n$ and
$V$ is a nonempty open subset of $Y$. We may assume that $W\supset W_1\times \ldots
\times W_n$, where $W_i$ is a nonempty open subset of $X$. By the transitivity of $(X,T)$, there is a nonempty open
subset $U$ of $X$ such that for each $1\le i\le n$ there is $k_i\in\N$ with $T^{k_i}U\subset W_i$.
By Theorem \ref{train} there exists $x\in U$ such that $N_{T\times S}((x,y),U\times V)$ is syndetic for any $y\in V$.
This implies that
$$N_{T^{(n)}\times S}((T^{k_1}x,\ldots,T^{k_n}x),y), (T^{k_1}U\times \ldots \times T^{k_n}U)\times V)
\supset N_{T\times S}((x,y),U\times V)$$
is syndetic for any $y\in Y$.

Observing that $x'=(T^{k_1}x,\ldots,T^{k_n}x)\subset T^{k_1}U\times\ldots\times T^{k_n}U\subset W_1\times \ldots\times W_n
\subset W$ we get that $N_{T^{(n)}\times S}((x',y), W\times V)$ is syndetic for any $y\in Y$. Again applying Theorem \ref{train}
we get the conclusion.
\end{proof}

As a corollary we have

\begin{cor} Let $(X,T)$ be a transitive t.d.s. $(X,T)$  is disjoint from all minimal systems if and only if
$(K(X),T_K)$  is disjoint from all minimal systems.
\end{cor}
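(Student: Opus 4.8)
The plan is to prove the equivalence by combining Theorem~\ref{product} with Proposition~\ref{JYY-thm}, using the forward direction to get the hyperspace disjointness and the reverse direction to come back. First I would establish the forward implication: assume $(X,T)$ is transitive and disjoint from all minimal systems. By Theorem~\ref{product}, $(X,T)$ is strongly disjoint from all minimal systems, i.e.\ $(X^n,T^{(n)})$ is disjoint from all minimal systems for every $n\in\N$. This is exactly the hypothesis of Proposition~\ref{JYY-thm}(2), which then yields that $(K(X),T_K)$ (and also $(M(X),T_M)$) is disjoint from all minimal systems. So the forward direction is essentially immediate once Theorem~\ref{product} is in hand.

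For the reverse implication, suppose $(K(X),T_K)$ is disjoint from all minimal systems. Here I would like to invoke Proposition~\ref{JYY-thm}(1), but that statement carries the extra hypothesis that $(K(X),T_K)$ is \emph{weakly mixing}. So the main point to check is that disjointness of $(K(X),T_K)$ from all minimal systems already forces $(K(X),T_K)$ to be weakly mixing. This follows from Lemma~\ref{huang-ye-05} applied to the transitive system $(K(X),T_K)$: a transitive system disjoint from all minimal systems is automatically weakly mixing (indeed a weakly mixing $M$-system). One needs only observe that $(K(X),T_K)$ is transitive; this is standard, since $(X,T)$ is transitive (being disjoint from minimals it is even weakly mixing by Lemma~\ref{huang-ye-05}), and weak mixing of $(X,T)$ passes to transitivity (in fact weak mixing) of the induced hyperspace system. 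Once $(K(X),T_K)$ is known to be weakly mixing and disjoint from all minimal systems, Proposition~\ref{JYY-thm}(1) gives that $(X,T)$ is weakly mixing and disjoint from all minimal systems, completing the reverse direction.

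The step I expect to require the most care is verifying the transitivity (hence weak mixing) of $(K(X),T_K)$ needed to apply Proposition~\ref{JYY-thm}(1) in the reverse direction, since Proposition~\ref{JYY-thm}(1) is stated with weak mixing of the hyperspace as a standing assumption rather than a conclusion. The cleanest route is to note that the reverse hypothesis presupposes $(K(X),T_K)$ is a t.d.s.\ disjoint from all minimal systems; by Furstenberg's observation that in a disjoint pair one factor is minimal, together with the fact that $(K(X),T_K)$ is not minimal unless trivial, one concludes $(K(X),T_K)$ is transitive, and then Lemma~\ref{huang-ye-05} upgrades this to weak mixing. With that in place the two propositions fit together to give both directions, and the corollary follows.
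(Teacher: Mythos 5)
Your forward direction is exactly the paper's: Theorem~\ref{product} upgrades disjointness to strong disjointness, and Proposition~\ref{JYY-thm}(2) then gives disjointness of $(K(X),T_K)$. The genuine gap is in your reverse direction, at precisely the step you flagged as delicate. Your ``cleanest route'' fails: Furstenberg's observation that in a disjoint pair one of the two systems is minimal is \emph{vacuously} satisfied here, because the partner $(Y,S)$ ranges over minimal systems --- it imposes no constraint at all on $(K(X),T_K)$. And even if you knew $(K(X),T_K)$ were not minimal, ``not minimal'' does not yield ``transitive'': membership in $\mathcal{M}^\perp$ does not imply transitivity, as the paper itself recalls (there is a non-transitive distal system disjoint from all minimal systems, \cite[Example 4.10]{HY05}). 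Your other remark is circular: you appeal to ``$(X,T)$ being disjoint from minimals is weakly mixing by Lemma~\ref{huang-ye-05}'' to get transitivity of the hyperspace, but disjointness of $(X,T)$ is exactly the conclusion of the reverse implication; the hypothesis only gives transitivity of $(X,T)$, and by Banks' theorem transitivity of $(K(X),T_K)$ is equivalent to \emph{weak mixing} of $(X,T)$, which you do not yet have. So the weak mixing hypothesis of Proposition~\ref{JYY-thm}(1) remains unverified and the reverse direction is unproved as written.

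For comparison, the paper's proof is the same one-line combination of Theorem~\ref{product} and Proposition~\ref{JYY-thm}, so your skeleton matches it; the difference is that you tried to make the weak-mixing hypothesis explicit and the patch is fallacious. A correct way to close the gap is to prove the reverse implication directly, with no mixing assumption at all: let $J\subseteq X\times Y$ be a joining of $(X,T)$ with a minimal $(Y,S)$, write $J_y=\{x\in X:(x,y)\in J\}$, and set
\begin{equation*}
\widetilde{J}=\bigl\{(A,y)\in K(X)\times Y:\ A\cap J_y\neq\emptyset\bigr\}.
\end{equation*}
Then $\widetilde{J}$ is closed (a limit of points $x_n\in A_n\cap J_{y_n}$ lands in $A\cap J_y$), it is $T_K\times S$-invariant (if $x\in A\cap J_y$ then $Tx\in T_K A\cap J_{Sy}$), it projects onto $K(X)$ (given $A$, pick $x\in A$ and $y$ with $(x,y)\in J$, using that $J$ projects onto $X$), and it projects onto $Y$ (each fiber $J_y$ is nonempty, so $(J_y,y)\in\widetilde{J}$). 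Hence $\widetilde{J}$ is a joining of $(K(X),T_K)$ and $(Y,S)$, so disjointness of the hyperspace forces $\widetilde{J}=K(X)\times Y$; evaluating at singletons $A=\{x\}$ gives $J=X\times Y$. Substituting this argument for your transitivity claim repairs the proof.
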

\begin{proof} It follows from Theorem \ref{product} and Proposition \ref{JYY-thm}.
\end{proof}

\begin{thm} Assume that a transitive system $(X,T)$ is disjoint from all minimal systems.
Then so is $(X,T^n)$  for any $n\in \N$.
\end{thm}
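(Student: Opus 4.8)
The plan is to derive the disjointness of $(X,T^n)$ from the already-available disjointness of $(X,T)$ from all minimal systems, by replacing the target minimal system with a suitable auxiliary minimal system on which the hypothesis can be used. Fix a minimal system $(Y,S)$. Since disjointness means precisely that the only joining of $(X,T^n)$ and $(Y,S)$ is the product, it suffices to show that an arbitrary joining $J\subseteq X\times Y$ of $(X,T^n)$ and $(Y,S)$ equals $X\times Y$. The obstruction to a naive reduction is that $(Y,S^n)$ need \emph{not} be minimal, so one cannot simply invoke disjointness of $(X,T)$ from $(Y,S^n)$. To circumvent this I would pass to the $n$-point tower over $(Y,S)$: set $\widetilde Y=Y\times\{0,1,\ldots,n-1\}$ and define $\widetilde S(y,i)=(y,i+1)$ for $i<n-1$ and $\widetilde S(y,n-1)=(Sy,0)$. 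Because $\widetilde S^{\,n}(y,i)=(Sy,i)$ acts as $S$ on each level, the minimality of $(Y,S)$ propagates to $(\widetilde Y,\widetilde S)$ (and $\widetilde S$ is surjective since $S$ is); hence $(X,T)\perp(\widetilde Y,\widetilde S)$ by hypothesis.

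Next I would lift $J$ to a joining of $(X,T)$ and $(\widetilde Y,\widetilde S)$ by spreading it across the levels of the tower. Concretely, I would put
\[
\widehat J=\bigcup_{i=0}^{n-1}\{(T^i x,\,y,\,i):(x,y)\in J\}\subseteq X\times\widetilde Y .
\]
Each level piece is the continuous image of the compact set $J$, so $\widehat J$ is closed. A direct check shows that $T\times\widetilde S$ carries the $i$-th level onto the $(i+1)$-st for $i<n-1$, while on the top level $(T\times\widetilde S)(T^{n-1}x,y,n-1)=(T^n x,Sy,0)$, which returns to the bottom level precisely because $J$ is $(T^n\times S)$-invariant; thus $\widehat J$ is $(T\times\widetilde S)$-invariant. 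Its projection to $X$ is $\bigcup_{i}T^i(p_1(J))=X$ since $p_1(J)=X$, and its projection to $\widetilde Y$ is $Y\times\{0,\ldots,n-1\}=\widetilde Y$ since $J$ projects onto $Y$. Hence $\widehat J$ is an honest joining of $(X,T)$ and $(\widetilde Y,\widetilde S)$.

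Finally, disjointness $(X,T)\perp(\widetilde Y,\widetilde S)$ forces $\widehat J=X\times\widetilde Y$. Intersecting with the bottom level $X\times Y\times\{0\}$ and comparing with the definition of $\widehat J$, whose only part living in that level is $\{(x,y,0):(x,y)\in J\}$, I would conclude $J=X\times Y$. Thus $(X,T^n)\perp(Y,S)$, and as $(Y,S)$ was an arbitrary minimal system, $(X,T^n)$ is disjoint from all minimal systems. The step requiring the most care is the tower construction itself: verifying that $(\widetilde Y,\widetilde S)$ is genuinely minimal and that $\widehat J$ is a bona fide joining (closed, invariant in the right sense, and onto both factors); once this bookkeeping is discharged the conclusion is immediate.

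I note that an alternative staying closer to the paper's framework would instead verify condition~(2) of Theorem~\ref{train} for $(X,T^n)$ (which is weakly mixing, hence transitive, because $(X,T)$ is): applying Theorem~\ref{train-y} to $(X,T)\perp(\widetilde Y,\widetilde S)$ with the open set $V\times\{0\}\subseteq\widetilde Y$ produces return times that are exactly the multiples $n\cdot N_{T^n\times S}((x,y),U\times V)$, and a short gap-counting argument then transfers syndeticity from this set of multiples of $n$ back to $N_{T^n\times S}((x,y),U\times V)$ itself.
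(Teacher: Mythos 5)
Your proof is correct, and it takes a genuinely different route from the paper, although both arguments are built on the same auxiliary object: the $n$-fold tower $(\widetilde Y,\widetilde S)$ over the given minimal system $(Y,S)$. The paper stays entirely inside the framework of its criterion (Theorem \ref{train}): it applies the criterion to $(X,T)\perp(\widetilde Y,\widetilde S)$ with the open set $V\times\{1\}$, observes that the resulting syndetic return-time set consists only of multiples of $n$ (since the tower level returns to $1$ only at such times), divides by $n$ to get syndeticity of $N_{T^n\times S}((x,y),U\times V)$, and then invokes the criterion again for $(X,T^n)$ --- which requires knowing beforehand that $(X,T^n)$ is weakly mixing. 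Your primary argument instead works directly with the definition of disjointness: you lift an arbitrary joining $J$ of $(X,T^n)$ and $(Y,S)$ to the set $\widehat J=\bigcup_{i=0}^{n-1}\{(T^ix,y,i):(x,y)\in J\}$, verify it is a closed, $(T\times\widetilde S)$-invariant joining of $(X,T)$ with the minimal tower (the wrap-around at the top level is exactly where $(T^n\times S)$-invariance of $J$ enters), conclude $\widehat J=X\times\widetilde Y$ from the hypothesis, and read off $J=X\times Y$ on the bottom level. This buys you two things: the argument is more elementary (no appeal to Theorem \ref{train}, no need for weak mixing or even transitivity of $(X,T^n)$), and it proves the sharper local implication $(X,T)\perp(\widetilde Y,\widetilde S)\Rightarrow(X,T^n)\perp(Y,S)$ for a single fixed minimal target. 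What the paper's route buys is thematic coherence --- the applications section is meant to showcase the syndeticity criterion --- and it yields along the way the explicit return-time information for $(X,T^n)$. Your closing remark sketches essentially the paper's proof, so you have in effect identified both routes; the only steps in your main argument that deserve the careful write-up you promise are the minimality of the tower (intersect a closed invariant set with one level, where $\widetilde S^{\,n}$ acts as $S$, then sweep the levels with $\widetilde S$) and the closedness of $\widehat J$ (a finite union of continuous images of the compact set $J$), both of which go through without difficulty.
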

\begin{proof} Fix $n\in\N$ with $n\ge 2$ and let $(Y,S)$ be a minimal system. Set $\tilde{Y}=\cup_{i=1}^nY\times \{i\}$
and define $\tilde{S}: \tilde{Y}\rightarrow \tilde{Y}$ such that for any $y\in Y$,
$$\tilde{S}(y,i)=(y,i+1)\ \text{for}\ i=1,\ldots,n-1\ \text{and}\ \tilde{S}(y,n)=(Sy,1).$$
It is clear that $(\tilde{Y},\tilde{S})$ is also minimal and $\tilde{S}^{nk}(y,1)=(S^ky,1)$ for any $k\in\N$.

Let $U,V$ be open nonempty subsets of $X$ and $Y$ respectively. Then by Theorem \ref{train} we know that
there is $x\in U$ such that $N_{T\times \tilde{S}}((x, (y,1)), U\times (V\times \{1\}))$ is syndetic
for any $y\in Y$.
Assume that $k\in N_{T\times \tilde{S}}((x, (y,1)), U\times (V\times \{1\}))$ then there is some $k_1$ with $k=nk_1$
such that $(T^n)^{k_1}x\in U$ and ${S}^{k_1}(y)\in V$. So, $k_1\in N_{T^n\times S}((x,y),U\times V)$
which implies that $N_{T^n\times S}((x,y),U\times V)$ is also syndetic for any $y\in Y$. Again applying
Theorem \ref{train}, we conclude that $(X,T^n)$ is disjoint from all minimal systems.
\end{proof}

Finally, we restate a question in \cite{DSY12}

\begin{ques} Let $(X_1,T_1)$ and $(X_2,T_2)$ be transitive and be disjoint from all minimal systems.
Is it true that $(X_1\times X_2, T_1\times T_2)$ is also disjoint from all minimal systems?
\end{ques}
We note that we do not know the answer even for the very simple case when $X_1=X_2=X$, $T_1=T$ and $T_2=T^2$.

\end{document}